\DeclareMathOperator{\esssup}{esssup}
\LARGE \setlength{\baselineskip}{10pt}
\newtheorem{lemma}{Lemma}[section]
\newtheorem{definition}[lemma]{Definition}
\newtheorem{theorem}[lemma]{Theorem}
\newtheorem{prop}[lemma]{Proposition}
\newtheorem{rmk}[lemma]{Remark}
\numberwithin{equation}{section} 
\title{\huge Dynamic Programming Principle and Associated Hamilton-Jacobi-Bellman Equation for
Stochastic Recursive Control Problem with Non-Lipschitz Aggregator\thanks{JP is supported by NSF of China (No.11426151); QZ is supported by NSF of China (No. 11101090, 11471079) and the Science and Technology Commission of Shanghai Municipality (No. 14XD1400400).}}
\author{ \Large Jiangyan Pu\footnote{School of International Finance, Shanghai Finance University, Shanghai 201209, China. (Email: \texttt{pujy@shfc.edu.cn})
}
~~~and~~~\Large Qi Zhang\footnote{School of Mathematical Sciences, Fudan
University, Shanghai 200433, China. (Email:
\texttt{qzh@fudan.edu.cn})}
}
\date{}
\begin{document}
\maketitle

 \textbf{Abstract.} In this work we study the stochastic recursive control problem, in which the aggregator (or called generator) of the backward stochastic differential equation describing the running cost is continuous but not necessarily Lipschitz with respect to the first unknown variable and the control, and monotonic with respect to the first unknown variable. The dynamic programming principle and the connection between the value function and the viscosity solution of the associated Hamilton-Jacobi-Bellman equation are established in this setting by the generalized comparison theorem of backward stochastic differential equations and the stability of viscosity solutions. Finally we take the control problem of continuous-time Epstein-Zin utility with non-Lipschitz aggregator as an example to demonstrate the application of our study.

\textbf{Key words:} stochastic recursive control problem, non-Lipschitz aggregator, dynamic programming principle, Hamilton-Jacobi-Bellman equation,  continuous-time Epstein-Zin utility, viscosity solution.

\textbf{Mathematics Subject Classification:}  93E20, 90C39, 35K10

\large
\section{Introduction}

The stochastic control theory arose along with the birth of stochastic analysis and developed fast in the last few decades due to its wide applications. Indeed the stochastic control system is a natural and effective way to involve the uncertainty, disturbance and ambiguity appearing in the real-world control problems. Its powerful feature is especially embodied in the mathematical finance problems as we study the pricing of contingent claim and the optimal strategy in the stochastic financial models, which on the contrary promotes the development of stochastic control theory.

In the development of stochastic control theory, the backward stochastic differential equation (BSDE for short) plays a big role. First of all, linear BSDE itself originated from the study of maximum principle for a stochastic control problem in Bismut \cite{bis1} (1973), where it appears as the adjoint equation, and later the application of this pioneer work to mathematical finance was presented by Bismut \cite{bis2} (1975). The maximum principle reveals that the optimal solution of a stochastic control problem can be depicted by the stochastic Hamiltonian system which is actually a forward-backward stochastic differential equation (FBSDE for short). Furthermore, when the stochastic control system is observed partially or the state equation itself is a stochastic partial differential equation, the adjoint equation in this case is a backward stochastic partial differential equation, which was indicated in Bensoussan \cite{ben} (1983). The maximum principle for stochastic control system with the diffusion term dependent on control and the control regions not necessarily convex was another mile of stone in stochastic control theory, which was solved in Peng \cite{pen1} (1990) by using the second-order matrix-valued BSDE to serve as the adjoint equation. We recommend the reader to refer to the monograph \cite{yo-zh} (1999) by Yong and Zhou, in which comprehensive introductions to the stochastic control theories are presented.

The role of BSDE in stochastic control theory is not only restricted to the maximum principle as an adjoint equation. The nonlinear BSDE has even greater influences in the stochastic recursive utilities and their associated control problems, thanks to the importance of recursive utilities in modern mathematical finance. The existence and uniqueness of adapted solution to nonlinear BSDE in mathematics was proved by Pardoux and Peng \cite{par-pen1} (1990). Later Duffie and Epstein \cite{Duffie} (1992) put forward the concept of stochastic differential utility in a conditional expectation form which is equivalent to the nonlinear BSDE. Since then both BSDEs and stochastic control problems in mathematical finance achieved a great progress benefiting from their connections. The reader can refer to El Karoui, Peng and Quenez \cite{el} (1997) which concluded early works on BSDEs and their applications to mathematical finance. 

The stochastic recursive control problem we concern with was introduced by Peng \cite{Peng1992} (1992). Its state equation is a stochastic differential equation (SDE for short):
\begin{equation}\label{eq1a}
\begin{aligned}
X_s^{t,x;v}=x+\int_t^sb(r,X_r^{t,x;v},v_r)dr +\int_t^s\sigma (r,X_r^{t,x;v},v_r)dB_r\ \ \ \ {\rm for}\ x\in\mathbb{R}^n.
\end{aligned}
\end{equation}
The cost functional is associated with the solution to a BSDE on the interval $[t,T]$ coupled with the state process:
\begin{equation}\label{eq5a}
Y_s^{t,x;v}=h(X_T^{t,x;v})+\int_s^Tf(r,X_r^{t,x;v},Y_r^{t,x;v},Z_r^{t,x;v},v_r)dr-\int_s^TZ_r^{t,x;v} dB_r
\end{equation}
and defined as below
\begin{equation}\label{eq9}
J(t,x;v)\triangleq  Y_t^{t,x;v}, 
\end{equation}
where $v$ is an admissible control process in the admissible control set $\mathcal{U}$. 
The corresponding control problem is to find an optimal $\bar{v}\in\mathcal{U}$ to maximize the cost functional (\ref{eq9}) for given $(t,x)$. As you can see, FBSDE arises again to depict this recursive control system. Actually, as a popular equation, FBSDEs appear in numerous control and related mathematical finance problems. For the theories and applications of FBSDEs, we recommend the reader to refer to e.g. Ma, Protter and Yong \cite{ma-pr-yo} (1994), Peng and Wu \cite{Pengwu} (1999), Yong \cite{yo} (2010) or the classical book \cite{ma-yo} (1999) by Ma and Yong.

For this stochastic recursive control system (\ref{eq1a})--(\ref{eq9}), Peng \cite{Peng1992} established the dynamic programming principle in the Lipschitz setting of the aggregator (or called generator from BSDE point of view) and connected its value function with the Hamilton-Jacobi-Bellman (HJB for short) equation.
Since the recursive utility can be regarded as the solution of BSDE (\ref{eq5a}) with the conditional expectation form, the stochastic recursive control system in form includes the control problem related to stochastic (recursive) differential utilities
\begin{equation}\label{pz36}
V_t=E^{\mathscr{F}_t}[\int_t^Tf(c_s,V_s)ds],
\end{equation}
where $c$ is the consumption process serving as the control. It is well known that the recursive utility is an extension of the time-additive expected utility. In comparison with the latter, the former's risk aversion and intertemporal substitutability are separated in the aggregator
which is ``useful in
clarifying the determinants of asset prices and presumably for a number of other issues in capital theory
and finance" (see \cite{Duffie}).

In our study we aim to relax the Lipschitz restriction to the aggregator, i.e. the aggregator $f(c,u)$ in (\ref{pz36}) is continuous but not necessarily Lipschitz with respect to both the utility variable $u$ and the consumption variable $c$, moreover, it is of polynomial growth with respect to $u$ in our assumptions. These settings would make much difference in the deduction of the dynamic programming principle and bring much trouble in the verification of conditions for the stability of viscosity solutions which leads to the connection between the value function and the viscosity solution of the associated HJB equation. Although there are some further results on stochastic recursive control problem from the dynamic programming principle point of view, such as Peng \cite{Peng} (1997) for non-Markovian framework, Buckdahn and Li \cite{buc-li} (2008) for stochastic differential games, Wu and Yu \cite{Wu} (2008)
for the cost functional generated by reflected BSDE, Li and Peng \cite{Li} (2009) for the cost functional generated by BSDE with jumps, Chen and Wu \cite{che-wu} (2012) for the state equation with delay, etc.,
as far as we know there are no existing results in the non-Lipschitz aggregator setting.
However, back to the stochastic recursive utilities, the aggregators in many situations are not Lipschitz with respect to the utilities and consumptions. For instance, the aggregator of the well-known continuous-time Epstein-Zin utility has a form
\begin{equation}\label{eq6}
f(c,u)=\frac{\delta}{1-{1\over\psi}}(1-\gamma)u\Big[ (\frac{c}{((1-\gamma)u)^{\frac{1}{1-\gamma}}})^{1-\frac{1}{\psi}} - 1 \Big],
\end{equation}
where $\delta >0$ is the rate of time preference, $0 < \gamma \neq 1$ is the coefficient of relative risk aversion and $0 < \psi \neq 1$ is the elasticity of intertemporal substitution. 
In general, the aggregator $f(c,u)$ in (\ref{eq6}) is not Lipschitz with respect to $c$ and $u$ but could be monotonic with respect to the latter by suitable choices of parameters. We notice that a remarkable process for stochastic recursive control problem with non-Lipschitz aggregator had been made by Kraft, Seifried and Steffensen \cite{Kraft}, in which the verification theorem is proved for the non-Lipschitz Epstein-Zin aggregator
and explicit solutions to HJB equation are given in some cases. Nevertheless, the dynamic programming principle for non-Lipschitz stochastic recursive control system is still not involved. 

Certainly, one important technique to study stochastic recursive control problem in the non-Lipschitz setting is how to deal with the BSDE with non-Lipschitz aggregator. There is much literature devoting to the relaxation of Lipschitz condition of the aggregator $f(t,y,z)$ with respect to the first unknown variable $y$ and/or the second unknown variable $z$, such as Lepeltier and San Martin {\cite{LS2} (1997) for linear growth condition of $y$ and $z$, Kobylanski \cite{K1} (2000) for quadratic growth condition of $z$, Briand and Carmona \cite{BC} (2000) for polynomial growth condition of $y$ and Pardoux \cite{Pardoux} (1999) for arbitrary growth condition of $y$, to name but a few. As for the monotonic condition of $y$ it was first introduced to BSDE theory by Peng \cite{pen2} (1991) for the infinite horizon BSDE. After that many works adopted the monotonic condition to weaken the Lipschitz assumption or make BSDE more applicable to the related fields, including e.g. Hu and Peng \cite{Hu} (1995), 
Pardoux and Tang \cite{Pardoux3} (1999), Briand, Delyon, Hu, Pardoux and Stoica \cite{Briand03} (2003), besides \cite{BC}, \cite{Pardoux} and \cite{Pengwu} mentioned above. However, to our best knowledge there are no existing results about the dynamic programming principle and associated HJB equation for a stochastic control system involving nonlinear BSDE with the monotonic or other non-Lipschitz aggregators.

This paper generalizes the results in \cite{Peng1992} by studying a stochastic recursive control problem where the cost functional generated by BSDE with the non-Lipschitz but continuous and monotonic aggregator. We first establish the dynamic programming principle with the helps of the backward semigroups and generalized comparison theorem in non-Lipschitz setting, and then connect the value function of our concerned control problem with a viscosity solution of the corresponding HJB equation by means of stability of viscosity solution. Needless to say, the relaxation of Lipschitz condition makes our control problem applicable to more mathematical finance models, including the continuous-time Epstein-Zin utility with non-Lipschitz aggregator.

The rest of this paper is organized as follows. In Section 2, some useful notation is introduced and the necessary preliminaries are clarified. Then we deduce the dynamic programming principle in a non-Lipschitz aggregator setting in Section 3. In Section 4 we establish the relationship between the value function of the control problem and the viscosity solution of the corresponding HJB equation provided that the aggregator of BSDE independent of the second unknown variable. Finally, an example from the control problem of continuous-time Epstein-Zin utilities is given in Section 5 to demonstrate the application of our work to mathematical finance. 

\section{Notation and preliminaries}

Given a complete probability space $(\Omega, \mathscr{F}, P)$, let $(B_s)_{0 \leq s \leq T}$ be a $d$-dimensional Brownian motion on the probability space. Denote by $(\mathscr{F}_s)_{0 \leq s \leq T}$ the nature filtration generated by $(B_s)_{0 \leq s \leq T}$ with $\mathscr{F}_0$ containing all $P$-null sets of $\mathscr{F}$.
We use $|\cdot|$ and $\langle \cdot, \cdot \rangle$ throughout the paper to denote the Euclidean norm and dotproduct, respectively,
and then we define some useful notation.
\begin{definition}
For $q\geq1$, $0\leq t\leq T$, we denote by

$\bullet$ $L^{2q}(\Omega,\mathscr{F}_t;\mathbb{R}^n)$: the space of all $\mathscr{F}_t$-measurable random variables $\xi:\Omega\to\mathbb{R}^n$ satisfying $E[|\xi|^{2q}]<\infty$;

$\bullet$ $S^{2q}(t,T;\mathbb{R}^n)$: the space of all joint measurable processes $\varphi:[t,T]\times\Omega\to\mathbb{R}^n$ satisfying
\begin{enumerate}
\item [(i)] $\varphi_s$ is $\mathscr{F}_{s}$-adapted measurable and $\varphi_s$ is a.s. continuous for $t\leq s\leq T$,
\item [(ii)] $E[\sup\limits_{s\in [t,T]} |\varphi(s)|^{2q}] < \infty$;
\end{enumerate}

$\bullet$ $M^{2q}(t,T;\mathbb{R}^n)$: the space of all joint measurable processes $\varphi:[t,T]\times\Omega\to\mathbb{R}^n$ satisfying
\begin{enumerate}
\item [(i)] $\varphi_s$ is $\mathscr{F}_{s}$-adapted measurable for $t\leq s\leq T$,
\item [(ii)] $E[\int_t^T|\varphi(s)|^{2q}ds] < \infty$.
\end{enumerate}
\end{definition}

Next we clarify the set of admissible control processes $\mathcal{U}$ in the control system (\ref{eq1a})--(\ref{eq9}) which is defined as below:
\begin{equation*}
\mathcal{U}\triangleq \{v|\ v\in M^2(0,T;\mathbb{R}^m)\ \text{and takes values in a compact set}\ U\subset\mathbb{R}^m \}.
\end{equation*}



We assume the conditions to the coefficients of state equation (\ref{eq1a}) as follows.\\
(H1) Both $b(t,x,v): [0,T]\times \mathbb{R}^n \times U \rightarrow \mathbb{R}^n$ and $\sigma(t,x,v): [0,T]\times \mathbb{R}^n \times U \rightarrow \mathbb{R}^{n\times d}$ are joint measurable and continuous with respect to $t$.\\
(H2) For any $t \in [0,T]$, $x, x^\prime \in \mathbb{R}^n$, $v, v^\prime \in U$, there exists a constant $L \geq 0$ such that
\begin{eqnarray*}
|b(t,x,v)-b(t,x^\prime,v^\prime)|+|\sigma(t,x,v)-\sigma(t,x^\prime,v^\prime)|\leq L(|x-x^\prime|+|v-v^\prime|). \label{eq3}
\end{eqnarray*}

A standard argument for SDE with Lipschitz condition leads to the existence and uniqueness result to the solution of SDE (\ref{eq1a}). For the use of the proof for dynamic programming principle, we consider a general SDE with a random variable initial value, and conclude the existence result, uniqueness result and some useful estimates to its solution.
\begin{prop}\label{pz24}
Assume Conditions (H1)--(H2). Given $q\geq1$, for any $t\in[0,T]$, $v\in \mathcal{U}$, $\eta\in L^{2q}(\Omega,\mathscr{F}_t;\mathbb{R}^n)$, the following SDE
\begin{equation}\label{pz32}
\begin{aligned}
X_s^{t,\eta;v}=\eta+\int_t^sb(r,X_r^{t,\eta;v},v_r)dr +\int_t^s\sigma (r,X_r^{t,\eta;v},v_r)dB_r
\end{aligned}
\end{equation}
has a unique strong solution $X_\cdot^{t,\eta;v}\in S^{2q}(t,T;\mathbb{R}^n)$.

Moreover, there exists a constant $C>0$ depending only on $L, T$ such that for any $t\leq s\leq T$, $v, v^\prime\in \mathcal{U}$, $\eta, \eta^\prime \in L^{2q}(\Omega,\mathscr{F}_t;\mathbb{R}^n)$, we have
\begin{equation*}\label{eq5}
E\Big[\sup_{s\in [t,T]}|X_s^{t,\eta;v}|^{2q}\Big] \leq C(1+E[|\eta|^{2q}+\int_t^T|v_r|^{2q}dr])
\end{equation*}
and
\begin{equation*}\label{eq4}
E^{\mathscr{F}_t}\Big[\sup_{s\in [t,T]}|X_s^{t,\eta;v}- X_s^{t,\eta^\prime ;v^\prime}|^{2q}\Big] \leq C(|\eta-\eta^\prime|^{2q} +E^{\mathscr{F}_t}[\int_t^T|v_r-v_r^\prime|^{2q}dr]).
\end{equation*}
\end{prop}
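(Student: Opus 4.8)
The plan is to treat existence and uniqueness by a Picard iteration in the complete space $S^{2q}(t,T;\mathbb{R}^n)$, and then to derive the two estimates by combining the Burkholder-Davis-Gundy and H\"older inequalities with Gronwall's lemma, the stability estimate being handled in conditional form. To begin, I would set up the iteration $X^{0}_s \equiv \eta$ and
\[
X^{k+1}_s = \eta + \int_t^s b(r, X^k_r, v_r)\,dr + \int_t^s \sigma(r, X^k_r, v_r)\,dB_r.
\]
Note first that (H1)--(H2) force $b$ and $\sigma$ to be jointly continuous, so $r\mapsto b(r,0,v_r)$ and $r\mapsto \sigma(r,0,v_r)$ are bounded because $v_r$ ranges in the compact set $U$; combined with the Lipschitz bound in $x$ this gives the linear growth $|b(r,x,v_r)| + |\sigma(r,x,v_r)| \le C(1 + |x|)$. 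Applying $E[\sup_{t\le u \le s}|\cdot|^{2q}]$ to the increment $X^{k+1}-X^k$, estimating the drift by H\"older and the diffusion by Burkholder-Davis-Gundy, and iterating, I would show that $(X^k)$ is Cauchy in $S^{2q}$; its limit solves (\ref{pz32}) and lies in $S^{2q}(t,T;\mathbb{R}^n)$. Uniqueness follows from the same Lipschitz--BDG--Gronwall estimate applied to the difference of two solutions.

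For the first (moment) estimate, I would apply $E[\sup_{t \le u \le s}|X_u^{t,\eta;v}|^{2q}]$ directly to (\ref{pz32}), bound the stochastic integral by Burkholder-Davis-Gundy and the Lebesgue integral by H\"older, and use the linear growth to produce a term $C\int_t^s E[\sup_{t \le \tau \le r}|X_\tau^{t,\eta;v}|^{2q}]\,dr$ together with the data terms $E[|\eta|^{2q}]$ and $E[\int_t^T |v_r|^{2q}\,dr]$. Closing by Gronwall then yields the stated bound.

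For the second (stability) estimate, writing $X_r = X_r^{t,\eta;v}$, $X'_r = X_r^{t,\eta';v'}$ and $\delta X_s = X_s - X'_s$, the difference solves a linear SDE with initial datum $\eta - \eta'$ and increments
\[
b(r,X_r,v_r) - b(r,X'_r,v'_r), \qquad \sigma(r,X_r,v_r) - \sigma(r,X'_r,v'_r),
\]
each controlled in norm by $L(|\delta X_r| + |v_r - v'_r|)$. Since $\eta,\eta'$ are $\mathscr{F}_t$-measurable and the Brownian increments on $[t,T]$ are independent of $\mathscr{F}_t$, I would run the whole estimate under $E^{\mathscr{F}_t}[\cdot]$, invoking the conditional Burkholder-Davis-Gundy inequality for the diffusion term and the conditional Gronwall lemma, which applies because the free term $|\eta - \eta'|^{2q} + E^{\mathscr{F}_t}[\int_t^T|v_r - v'_r|^{2q}\,dr]$ is $\mathscr{F}_t$-measurable and may be treated pathwise as a constant. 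This produces exactly the claimed inequality.

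The step I expect to require the most care is the passage to conditional expectation in the stability estimate: one must justify the conditional forms of the Burkholder-Davis-Gundy and Gronwall inequalities and genuinely exploit the $\mathscr{F}_t$-measurability of the initial data, rather than simply quoting the classical estimate valid for deterministic initial conditions.
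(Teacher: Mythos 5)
Your proposal is correct and follows exactly the standard Picard--BDG--Gronwall argument that the paper itself invokes without writing out (it simply remarks that ``a standard argument for SDE with Lipschitz condition'' gives the result); your handling of the conditional stability estimate via conditional Burkholder--Davis--Gundy and pathwise Gronwall with the $\mathscr{F}_t$-measurable free term is the right way to obtain the second inequality. The only point worth adding is that the pathwise Gronwall step needs the a.s.\ finiteness of $E^{\mathscr{F}_t}\bigl[\sup_{u\in[t,s]}|\delta X_u|^{2q}\bigr]$, which follows from the unconditional moment bound you prove first.
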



Then we turn to the assumptions to the coefficients of BSDE (\ref{eq5a}).\\
(H3) Both $h(x): \mathbb{R}^n\to\mathbb{R}^n$ and $f(t,x,y,z,v): [0,T]\times \mathbb{R}^n\times\mathbb{R}^1\times\mathbb{R}^d\times U \to\mathbb{R}^n$ are joint  measurable, and $f(t,x,y,z,v)$ is continuous with respect to $(t,y,v)$.
\\
(H4) For any $t \in [0,T]$, $x, x^\prime \in \mathbb{R}^n$, $y\in \mathbb{R}^1$, $z,z^\prime \in \mathbb{R}^d$, $v\in U$, there exists a constant $\lambda \geq 0$ such that
\begin{equation*}\label{eq7}
\begin{aligned}
|h(x)-h(x^\prime)|+|f(t,x,y,z,v)-f(t,x^\prime,y, z^\prime,v)|\leq \lambda ( |x-x^\prime|+|z-z^\prime|).
\end{aligned}
\end{equation*}
(H5) For any $t \in [0,T]$, $x\in \mathbb{R}^n$, $y,y^\prime \in \mathbb{R}^1$, $z\in \mathbb{R}^d$, $v\in U$, there exists a constant $\mu \in \mathbb{R}^1$ such that
\begin{equation*}\label{eq8}
(y-y^\prime)\big(f(t,x,y,z,v)-f(t,x,y^\prime,z,v)\big)\leq \mu |y-y^\prime|^2.
\end{equation*}
(H6) For a given $p \ge 1$ and any $t \in [0,T]$, $x\in \mathbb{R}^n$, $y\in \mathbb{R}^1$, $z\in \mathbb{R}^d$, $v\in U$, there exists a constant $\kappa>0$
such that $$|f(t,x,y,z,v)-f(t,x,0,z,v)| \leq \kappa(1+ |y|^p).$$

With Conditions (H1)--(H6), the existence and uniqueness of solution of BSDE  \eqref{eq5a} is an existing result and we recommend the reader to refer to \cite{Pardoux} for details. Also here we consider a general BSDE coupled with the solution of SDE (\ref{pz32}), and conclude the existence result, uniqueness result and some useful estimates to its solution.
\begin{prop}\label{pz27}
Assume Conditions (H1)--(H6). Given $q\geq1$, for any $t\in[0,T]$, $v\in \mathcal{U}$, $\eta\in L^{2q}(\Omega,\mathscr{F}_t;\mathbb{R}^n)$, the following BSDE
\begin{equation}\label{pz25}
\begin{aligned}
Y_s^{t,\eta;v}=h(X_T^{t,\eta;v})+\int_s^Tf(r,X_r^{t,\eta;v},Y_r^{t,\eta;v},Z_r^{t,\eta;v},v_r)dr-\int_s^TZ_r^{t,\eta;v} dB_r
\end{aligned}
\end{equation}
has a unique solution $(Y_\cdot^{t,\eta;v},Z_\cdot^{t,\eta;v})\in S^{2q}(t,T;\mathbb{R}^1) \times M^2(t,T;\mathbb{R}^d)$.

Moreover, there exists a constant $C>0$ depending only on $L, \lambda, \mu, \kappa, T$ such that for any $t\leq s\leq T$, $v\in \mathcal{U}$, $\eta, \eta^\prime \in L^{2q}(\Omega,\mathscr{F}_t;\mathbb{R}^n)$, we have
\begin{equation*}
|Y_t^{t,\eta;v}|^{2q}\leq C\big(1+|\eta|^{2q}+E^{\mathscr{F}_t}[\int_t^T|f(r,0,0,0,v_r)|^{2q}dr]),
\end{equation*}
\begin{eqnarray*}
|Y_t^{t,\eta;v}- Y_t^{t,\eta^\prime ;v}|
\leq C|\eta-\eta^\prime|
\end{eqnarray*}
and
\begin{equation*}
E[\sup_{s\in [t,T]}|Y_s^{t,\eta;v}|^{2q}+\int_t^T|Y_s^{t,\eta;v}|^{2q-2}|Z_s^{t,\eta;v}|^{2}]\leq C\big(1+E[|\eta|^{2q}+\int_t^T|f(r,0,0,0,v_r)|^{2q}dr]\big).
\end{equation*}
\end{prop}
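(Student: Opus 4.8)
The plan is to treat \eqref{pz25} as a decoupled monotone BSDE once the forward component is frozen, and then to derive the three estimates by an It\^o--Gronwall argument in which the monotonicity (H5), rather than the polynomial growth (H6), is what controls the non-Lipschitz dependence on $y$. For existence and uniqueness I would first invoke Proposition \ref{pz24} to fix $X^{t,\eta;v}\in S^{2q}(t,T;\mathbb{R}^n)$ and then view \eqref{pz25} as a BSDE with terminal value $\xi=h(X_T^{t,\eta;v})$ and generator $g(s,y,z)=f(s,X_s^{t,\eta;v},y,z,v_s)$. The hypotheses of the monotone-generator theory in \cite{Pardoux} are then checked one by one: $g$ is continuous and measurable in $(s,y)$ by (H3), monotone in $y$ by (H5), Lipschitz in $z$ by (H4), of polynomial growth in $y$ by (H6), while $\xi\in L^{2q}$ and $E[\int_t^T|g(s,0,0)|^{2q}\,ds]<\infty$ follow from the Lipschitz bound $|h(x)|\le|h(0)|+\lambda|x|$, the estimate $|f(s,X_s,0,0,v_s)|\le\lambda|X_s|+|f(s,0,0,0,v_s)|$ coming from (H4), the membership $X\in S^{2q}$, and the boundedness of $f(\cdot,0,0,0,\cdot)$ on the compact set $[0,T]\times U$ granted by continuity in $(t,v)$.

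For the first estimate I would apply It\^o's formula to $s\mapsto|Y_s^{t,\eta;v}|^{2q}$ on $[t,T]$, which produces the dissipation term $q(2q-1)\int_t^T|Y_r|^{2q-2}|Z_r|^2\,dr$ on the left and the drift term $2q\int_t^T|Y_r|^{2q-2}Y_r f(r,X_r,Y_r,Z_r,v_r)\,dr$ on the right. The central manoeuvre is to split $Y_rf=Y_r\big(f(r,X_r,Y_r,Z_r,v_r)-f(r,X_r,0,Z_r,v_r)\big)+Y_rf(r,X_r,0,Z_r,v_r)$; the first bracket is bounded by $\mu|Y_r|^2$ using (H5), while the second is estimated by $|Y_r|\big(\lambda|Z_r|+\lambda|X_r|+|f(r,0,0,0,v_r)|\big)$ via (H4). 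A Young's inequality of the form $\lambda|Y_r|^{2q-1}|Z_r|\le\tfrac{\varepsilon}{2}|Y_r|^{2q-2}|Z_r|^2+\tfrac{1}{2\varepsilon}|Y_r|^{2q}$ with $\varepsilon$ small lets the $|Z|^2$-contribution be absorbed into the dissipation term, and the remaining products are handled by $|Y_r|^{2q-1}a\le\tfrac{2q-1}{2q}|Y_r|^{2q}+\tfrac{1}{2q}a^{2q}$. Taking $E^{\mathscr{F}_t}[\cdot]$ removes the stochastic integral, and Gronwall's inequality together with the SDE bound of Proposition \ref{pz24} (where $\int_t^T|v_r|^{2q}\,dr$ is bounded because $U$ is compact) yields $|Y_t^{t,\eta;v}|^{2q}\le C(1+|\eta|^{2q}+E^{\mathscr{F}_t}[\int_t^T|f(r,0,0,0,v_r)|^{2q}\,dr])$.

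The Lipschitz estimate in $\eta$ would be obtained by writing the BSDE satisfied by the differences $\widehat Y=Y^{t,\eta;v}-Y^{t,\eta';v}$, $\widehat Z=Z^{t,\eta;v}-Z^{t,\eta';v}$ and applying It\^o to $|\widehat Y_s|^2$. Decomposing the generator difference into an $x$-increment, a $y$-increment and a $z$-increment, I would bound the $x$- and $z$-increments by $\lambda|\widehat X|$ and $\lambda|\widehat Z|$ through (H4) and the $y$-increment by $\mu|\widehat Y|^2$ through (H5); two Young's inequalities absorb $|\widehat Z|^2$ into the dissipation and reduce everything to $|\widehat Y|^2$ and $|\widehat X|^2$. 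After $E^{\mathscr{F}_t}[\cdot]$, Gronwall's inequality, the terminal bound $|\widehat Y_T|^2\le\lambda^2|\widehat X_T|^2$ and the difference estimate of Proposition \ref{pz24} with $v=v'$ (so the control term drops out), I obtain $|Y_t^{t,\eta;v}-Y_t^{t,\eta';v}|\le C|\eta-\eta'|$.

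The final $S^{2q}$-estimate is the technically most delicate point. Starting again from the It\^o identity for $|Y_s|^{2q}$ over $[s,T]$ and the same splitting as above, I would take the supremum over $s\in[t,T]$ and then expectations, controlling the resulting martingale term by the Burkholder--Davis--Gundy inequality, which bounds its supremum by $CE[(\int_t^T|Y_r|^{4q-2}|Z_r|^2\,dr)^{1/2}]$. The bound $(\int|Y|^{4q-2}|Z|^2)^{1/2}\le(\sup_r|Y_r|^{2q})^{1/2}(\int|Y|^{2q-2}|Z|^2)^{1/2}$ followed by Young's inequality lets me absorb a fraction of $E[\sup_s|Y_s|^{2q}]$ into the left side, while $E[\int_t^T|Y_r|^{2q-2}|Z_r|^2\,dr]$ is controlled by taking plain expectations in the It\^o identity at $s=t$. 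The only external input still needed is a bound on $E[\int_t^T|Y_r|^{2q}\,dr]$, which I would get by combining the first estimate with the flow property $Y_s^{t,\eta;v}=Y_s^{s,X_s^{t,\eta;v};v}$ and Proposition \ref{pz24}; adding the two pieces then gives the stated bound. The main obstacle throughout is that the polynomial growth (H6) makes the naive $L^2$ machinery fail, and the resolution is to lean entirely on the monotonicity (H5) to replace $Y_rf(r,X_r,Y_r,Z_r,v_r)$ by the harmless $f(r,X_r,0,Z_r,v_r)$, after which only the Lipschitz structure in $x$ and $z$ and the careful BDG/Young absorption remain.
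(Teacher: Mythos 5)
Your proposal follows essentially the same route as the paper, which itself only sketches this argument: existence and uniqueness are delegated to the monotone-generator theory of \cite{Pardoux} after freezing the forward component, and the three estimates come from the standard It\^o--Gronwall deduction with the monotonicity (H5) substituted for the Lipschitz condition in $y$, exactly as Remark 2.3 describes. The one ingredient you leave implicit is the localization (stopping-time) argument needed to justify taking expectations in the $L^{2q}$ It\^o identity before the higher moments are known to be finite --- the paper flags this explicitly, pointing to Lemma 3.3 of \cite{zh-zh3} --- but otherwise the proposal is correct.
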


\begin{rmk}
In the proof for Proposition \ref{pz27} we substitute the monotonic condition (H5) for the global Lipschitz condition in the standard deduction by It\^{o}'s formula to obtain the same forms of estimates. As for the $L^{2q}$ estimates of solutions, $q\geq1$, the common localization method is applied in the proof and the reader can refer to e.g. Lemma 3.3 in \cite{zh-zh3}.
\end{rmk}

Just as the classical situation, the comparison theorem of BSDE (\ref{eq5a}) is necessary to establish the dynamic programming principle, without the exception to the new situation that the aggregator satisfies the continuous and monotonic condition rather than the Lipschitz condition. For this setting, the following comparison theorem in Fan and Jiang \cite{Fan} is applicable.
\begin{theorem}\label{pz2} (Comparison theorem in \cite{Fan})
Let $(\xi,f)$ and $(\xi^\prime,f^\prime)$ be two generators for finite horizon BSDEs on $[0,T]$ with the corresponding solutions $(y, z)$ and $(y^\prime, z^\prime)$ in the space $S^{2}(0,T;\mathbb{R}^1) \times M^2(0,T;\mathbb{R}^d)$, respectively. Assume that $\xi, \xi^\prime\in L^{2}(\Omega,\mathscr{F}_T;\mathbb{R}^1)$ satisfy $\xi\leq\xi^\prime$ a.s. and  $f$ (resp. $f^\prime$) satisfies the following conditions:

(A1) $f(t,y,z)$ is weakly monotonic with respect to $y$, i.e. there exists a nondecreasing concave function $\rho:\mathbb{R}^+\to\mathbb{R}^+$ with $\rho(0)=0$ and $\rho(u)>0$ for $u>0$ such that $\int_{0^+}\frac{1}{\rho(u)}du= \infty$ and for any $t\in[0,T]$, $y_1, y_2\in\mathbb{R}^1$, $z\in\mathbb{R}^d$,
\begin{equation*}
\text{sgn}(y_1-y_2)\cdot(f(t,y_1,z)-f(t,y_2,z)) \leq \rho(|y_1-y_2|)\ \ \ \ {\rm a.s.};
\end{equation*}

(A2) there exists a continuous, nondecreasing and linear growth function $\phi:\mathbb{R}^+\to\mathbb{R}^+$ satisfying $\phi(0)=0$ such that for any $t\in[0,T]$, $y\in\mathbb{R}^1$, $z_1,z_2\in\mathbb{R}^d$,
\begin{equation*}
|f(t, y, z_1)-f(t, y, z_2)| \leq \phi(|z_1 - z_2|)\ \ \ \ {\rm a.s.};
\end{equation*}

(A3) for any $t\in[0,T]$, $f(t,y^\prime_t, z^\prime_t) \leq f^\prime (t,y^\prime_t, z^\prime_t)$ (resp. $f(t,y_t, z_t) \leq f^\prime (t,y_t, z_t)$).

Then we have
\begin{equation*}
y_t \leq y^\prime_t\ \ \ \ \text{for all $t \in [0,T]$\ \ \ \ {\rm a.s.}}
\end{equation*}
\end{theorem}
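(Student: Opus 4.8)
The plan is to reduce the claim to showing that the positive part $\hat y^{+}:=(y-y')^{+}$ vanishes identically, to control $(\hat y^{+})^{2}$ through It\^o's formula using the three structural hypotheses (A1)--(A3), and then to close the estimate by a nonlinear (Bihari--Osgood) Gronwall argument built on $\int_{0^{+}}\tfrac{du}{\rho(u)}=\infty$. First I would set $\hat y=y-y'$, $\hat z=z-z'$, so that on $[0,T]$
\begin{equation*}
\hat y_{t}=(\xi-\xi')+\int_{t}^{T}\big(f(s,y_{s},z_{s})-f'(s,y'_{s},z'_{s})\big)\,ds-\int_{t}^{T}\langle\hat z_{s},dB_{s}\rangle,
\end{equation*}
with $\hat y_{T}=\xi-\xi'\le 0$. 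Since $x\mapsto(x^{+})^{2}$ is $C^{1}$ with Lipschitz derivative $2x^{+}$ and a.e.\ second derivative $2\mathbf{1}_{\{x>0\}}$, the It\^o--Tanaka--Meyer formula together with $\hat y_{T}^{+}=0$ gives
\begin{equation*}
(\hat y_{t}^{+})^{2}+\int_{t}^{T}\mathbf{1}_{\{\hat y_{s}>0\}}|\hat z_{s}|^{2}\,ds
=2\int_{t}^{T}\hat y_{s}^{+}\big(f(s,y_{s},z_{s})-f'(s,y'_{s},z'_{s})\big)\,ds-2\int_{t}^{T}\hat y_{s}^{+}\langle\hat z_{s},dB_{s}\rangle.
\end{equation*}

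Next I would decompose the driver difference on the set $\{\hat y_{s}>0\}$, where $\mathrm{sgn}(y_{s}-y'_{s})=1$, as
\begin{equation*}
f(s,y_{s},z_{s})-f'(s,y'_{s},z'_{s})=\big(f(s,y_{s},z_{s})-f(s,y'_{s},z_{s})\big)+\big(f(s,y'_{s},z_{s})-f(s,y'_{s},z'_{s})\big)+\big(f(s,y'_{s},z'_{s})-f'(s,y'_{s},z'_{s})\big),
\end{equation*}
and bound the three terms by $\rho(\hat y_{s}^{+})$ via (A1), by $\phi(|\hat z_{s}|)$ via (A2), and by $0$ via (A3), respectively. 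Using the (at most) linear growth of $\phi$, Young's inequality lets me absorb the $z$-contribution into $\int_{t}^{T}\mathbf{1}_{\{\hat y_{s}>0\}}|\hat z_{s}|^{2}\,ds$ at the cost of an extra term $C(\hat y_{s}^{+})^{2}$. Taking $E^{\mathscr{F}_{t}}$ and killing the stochastic integral (justified by a localization with stopping times, since only $S^{2}\times M^{2}$ integrability is available, exactly as referenced in the Remark) yields
\begin{equation*}
(\hat y_{t}^{+})^{2}\le E^{\mathscr{F}_{t}}\Big[\int_{t}^{T}\big(2\hat y_{s}^{+}\rho(\hat y_{s}^{+})+C(\hat y_{s}^{+})^{2}\big)\,ds\Big].
\end{equation*}

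To conclude I would pass to $g(t):=E[(\hat y_{t}^{+})^{2}]$ and introduce the transformed modulus $\tilde\rho(w):=2\sqrt{w}\,\rho(\sqrt{w})$, so that $2\hat y_{s}^{+}\rho(\hat y_{s}^{+})=\tilde\rho\big((\hat y_{s}^{+})^{2}\big)$. One checks that $\tilde\rho$ is nondecreasing and concave (inherited from the concavity of $\rho$ with $\rho(0)=0$) and, by the substitution $w=u^{2}$, that $\int_{0^{+}}\tfrac{dw}{\tilde\rho(w)}=\int_{0^{+}}\tfrac{du}{\rho(u)}=\infty$. Concavity with $\rho(0)=0$ also forces $\rho(u)\ge cu$ near $0$, hence $\tilde\rho(w)\ge 2cw$, so the additional linear term is dominated, $\bar\rho(w):=\tilde\rho(w)+Cw\le K\tilde\rho(w)$, and $\bar\rho$ still satisfies the Osgood condition $\int_{0^{+}}\tfrac{dw}{\bar\rho(w)}=\infty$. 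Taking full expectation and applying Jensen's inequality (concavity of $\tilde\rho$) gives $g(t)\le\int_{t}^{T}\bar\rho(g(s))\,ds$, and Bihari's inequality then forces $g\equiv 0$. Thus $\hat y_{t}^{+}=0$ a.s.\ for each $t$, and path-continuity of $y,y'\in S^{2}$ upgrades this to $y_{t}\le y'_{t}$ for all $t\in[0,T]$ a.s.

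The main obstacle I expect is precisely this final nonlinear Gronwall step: one must reconcile the weak-monotonicity modulus $\rho$, which naturally acts on $\hat y^{+}$, with the quadratic quantity $(\hat y^{+})^{2}$ produced by It\^o's formula, verifying that the induced modulus $\tilde\rho$ remains concave (so Jensen applies after taking expectations) and still meets the Osgood non-integrability criterion, and that the linear residue from the $z$-estimate does not destroy it. The other technical point requiring care is the localization needed to discard the stochastic-integral term under only $S^{2}\times M^{2}$ integrability.
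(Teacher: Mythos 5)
The paper does not actually prove this statement: Theorem 2.3 is imported verbatim from Fan and Jiang \cite{Fan}, so there is no in-paper argument to compare yours against and your proposal must stand on its own. Most of it does. The It\^o--Tanaka expansion of $(\hat y^{+})^{2}$, the three-term decomposition of the driver difference matched to (A1)--(A3), and the analysis of the transformed modulus $\tilde\rho(w)=2\sqrt{w}\,\rho(\sqrt{w})$ --- its monotonicity and concavity, the substitution showing $\int_{0^{+}}dw/\tilde\rho(w)=\infty$, the bound $\tilde\rho(w)\ge 2cw$ near $0$ that absorbs the linear residue, and Jensen before Bihari --- are all correct and are exactly the delicate points you identified.

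The genuine gap is the sentence in which Young's inequality converts $2\hat y_{s}^{+}\phi(|\hat z_{s}|)$ into $\tfrac12\mathbf{1}_{\{\hat y_{s}>0\}}|\hat z_{s}|^{2}+C(\hat y_{s}^{+})^{2}$. That conversion needs $\phi(u)\le Cu$ for all $u\ge 0$, which is strictly stronger than (A2): a continuous, nondecreasing, linear-growth $\phi$ with $\phi(0)=0$ may have $\phi(u)/u\to\infty$ as $u\to 0^{+}$ (take $\phi(u)=\sqrt{u}$ near the origin). The best (A2) yields is $\phi(u)\le\phi(\delta)+C_{\delta}u$ with $\phi(\delta)>0$, and the leftover term $2\phi(\delta)\hat y_{s}^{+}$ survives the expectation as a quantity of order $\sqrt{g(s)}$, which fails the Osgood condition and is not dominated by $\tilde\rho(g)$ near $0$ (that would force $\rho$ to be bounded away from zero, contradicting $\rho(0)=0$); so Bihari no longer gives $g\equiv 0$. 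Nor does sending $\delta\to 0$ at the end repair this: the Bihari/Gronwall bound degrades with $C_{\delta}$, and for $\phi(u)=\sqrt{u}$ one has $C_{\delta}\sim\delta^{-1}$ against $\phi(\delta)=\sqrt{\delta}$, so the small additive term multiplied by the exploding constant does not tend to zero. In short, your argument proves the comparison theorem under the extra hypothesis $\sup_{u>0}\phi(u)/u<\infty$ --- which does cover every application of Theorem 2.3 made in this paper, since (H4) makes $f$ Lipschitz in $z$ --- but not under (A2) as stated; handling a general modulus $\phi$ is precisely what makes the result of \cite{Fan} nontrivial and requires a different device for the $z$-term.
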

%
%

\section{Dynamic programming principle with non-Lipschitz aggregator}
In this section, we prove a generalized dynamic programming for stochastic recursive control problem, in which the aggregator $f$ is not necessarily Lipschitz but continuous and monotonic. To begin with, we introduce the so-called backward semigroup brought forward by Peng in \cite{Peng}.

For given $t\in[0,T]$, $t_1 \in (t,T]$, $x\in\mathbb{R}^n$, $v\in \mathcal{U}$ and $\mathscr{F}_{t_1}$-measurable $\eta \in L^2(\Omega;\mathbb{R}^1)$, we define
\begin{equation*}\label{eq13}
G^{t,x;v}_{r,t_1}[\eta]\triangleq  \hat{Y}^{t,x;v}_r, ~~r \in [t,t_1],
\end{equation*}
where $(\hat{Y}_\cdot^{t,x;v},\hat{Z}_\cdot^{t,x;v})\in S^2(t,t_1;\mathbb{R}^1) \times M^2(t,t_1;\mathbb{R}^d)$ is the solution of BSDE on the interval $[t,t_1]$:
\begin{eqnarray*}
\hat{Y}^{t,x;v}_s=\eta+\int_s^{t_1}f(r, X_r^{t,x;v},\hat{Y}^{t,x;v}_r, \hat{Z}^{t,x;v}_r,v_r)dr-\int_s^{t_1}\hat{Z}^{t,x;v}_r dB_r
\end{eqnarray*}
and $X_\cdot^{t,x;v}$ is the solution of SDE \eqref{eq1a}.

In view of the uniqueness of solution of BSDE \eqref{eq5a}, it yields that
\begin{equation*}
G_{t,T}^{t,x;v}[h(X_T^{t,x;v})]=G_{t,t+\delta}^{t,x;v}[Y_{t+\delta}^{t,x;v}].
\end{equation*}
On the other hand, back to the control system (\ref{eq1a})--(\ref{eq9}) the relevant value function of the control problem maximizing the cost functional is defined as below:
\begin{equation}\label{eq10}
u(t,x)\triangleq\esssup\limits_{v\in \mathcal{U}}J(t,x;v), \quad (t,x) \in [0,T]\times \mathbb{R}^n.
\end{equation}
In fact, $u$ is still deterministic in our non-Lipschitz setting.
\begin{lemma}\label{pz1}
Assume Conditions (H1)--(H6).  Then the cost functional $u$ defined in (\ref{eq10}) is a deterministic function.
\end{lemma}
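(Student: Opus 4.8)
The plan is to show that $u(t,x)$, which is \emph{a priori} only an $\mathscr{F}_t$-measurable random variable, is in fact almost surely equal to a constant. First I would record that for each fixed $v\in\mathcal{U}$ the cost $J(t,x;v)=Y_t^{t,x;v}$ is $\mathscr{F}_t$-measurable, being the value at time $t$ of the adapted solution of BSDE (\ref{pz25}); consequently $u(t,x)=\esssup_{v\in\mathcal{U}}J(t,x;v)$ is $\mathscr{F}_t$-measurable as well. The mechanism I intend to exploit is the elementary fact that an $\mathscr{F}_t$-measurable random variable which is also independent of $\mathscr{F}_t$ must be independent of itself, hence a.s. constant.

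Next I would introduce the subclass $\mathcal{U}^t\subset\mathcal{U}$ of admissible controls that are measurable with respect to the filtration generated by the increments $(B_s-B_t)_{t\le s\le T}$, which is independent of $\mathscr{F}_t$. For such a control, the coupled pair $(X^{t,x;v},Y^{t,x;v})$ on $[t,T]$ is, by the uniqueness in Propositions \ref{pz24} and \ref{pz27} together with $x$ being deterministic, a functional of these post-$t$ increments only; hence $J(t,x;v)=Y_t^{t,x;v}$ is independent of $\mathscr{F}_t$, and being simultaneously $\mathscr{F}_t$-measurable it is a.s. a constant. Therefore $\esssup_{v\in\mathcal{U}^t}J(t,x;v)$ is an essential supremum of deterministic numbers, and so is itself deterministic.

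The crux is then the identity $\esssup_{v\in\mathcal{U}}J(t,x;v)=\esssup_{v\in\mathcal{U}^t}J(t,x;v)$. The inequality ``$\ge$'' is immediate from $\mathcal{U}^t\subset\mathcal{U}$. For ``$\le$'' I would first use the local (pasting) property of the solutions to show that the family $\{J(t,x;v)\}_{v\in\mathcal{U}}$ is directed upward: given $v_1,v_2$ and $A:=\{J(t,x;v_1)\ge J(t,x;v_2)\}\in\mathscr{F}_t$, the glued control $v_1\mathbf{1}_A+v_2\mathbf{1}_{A^c}\in\mathcal{U}$ yields $J=\max\{J(t,x;v_1),J(t,x;v_2)\}$ by the uniqueness for SDE (\ref{pz32}) and BSDE (\ref{pz25}). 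Hence there is a sequence $v^n\in\mathcal{U}$ with $J(t,x;v^n)\uparrow u(t,x)$ a.s., and it remains to approximate each $v^n$ by a control measurable in the increments after $t$ without lowering the cost. This I would carry out by a conditioning argument on the Wiener space: decomposing $\Omega$ according to $\mathscr{F}_t$ and the post-$t$ increments, freezing the first coordinate $\omega'$, and invoking a measurable selection to produce a single control in $\mathcal{U}^t$ whose cost dominates $J(t,x;v^n)$ for a.e.\ $\omega'$, the stability estimates of Proposition \ref{pz27} ensuring that the frozen problems depend continuously on the data.

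I expect this last reduction --- transferring a fully adapted control to one measurable in the post-$t$ increments while preserving (or improving) the cost --- to be the main obstacle, since it is where the Wiener-space decomposition, the regular conditional probability, and a measurable selection must be combined carefully. Once it is in place, chaining the three facts shows that $u(t,x)=\esssup_{v\in\mathcal{U}^t}J(t,x;v)$ is deterministic, which is the assertion of the lemma.
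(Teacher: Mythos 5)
Your overall architecture (reduce the essential supremum over $\mathcal{U}$ to one over the class $\mathcal{U}^t$ of controls measurable in the post-$t$ increments, on which each $J(t,x;v)$ is both $\mathscr{F}_t$-measurable and independent of $\mathscr{F}_t$, hence constant) is the right one and matches the paper's. But the step you yourself flag as ``the main obstacle'' --- transferring an arbitrary adapted control to one in $\mathcal{U}^t$ without lowering the cost, via a Wiener-space decomposition, regular conditional probabilities and a measurable selection --- is precisely the content of the lemma, and you do not carry it out. As written, this is a genuine gap: announcing that a conditioning-plus-selection argument ``must be combined carefully'' is not a proof, and the measurability of the selected control, the admissibility of the frozen control, and the identification of the frozen BSDE's solution with the conditional solution are exactly the points that make the naive freezing argument delicate.

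The paper sidesteps the selection entirely by inserting an intermediate class $\bar{\mathcal{U}}^t$ of step controls $\sum_j v^j I_{A_j}$ with $v^j\in\mathcal{U}^t$ and $\{A_j\}$ an $\mathscr{F}_t$-partition: for such controls $J(t,x;v)=\sum_j J(t,x;v^j)I_{A_j}\le\max_j J(t,x;v^j)$, so the reduction to $\mathcal{U}^t$ is elementary, and the only thing to prove is that $\bar{\mathcal{U}}^t$ is dense in $\mathcal{U}$ in $M^2$ and that $v\mapsto Y_t^{t,x;v}$ is continuous along such approximating sequences. That continuity is the actual novelty here, and your proposal does not address it: you appeal to ``the stability estimates of Proposition \ref{pz27}'', but those estimates are stated for varying initial data $\eta,\eta'$ with the \emph{same} control, and no Lipschitz estimate in $v$ is available because (H3) only gives continuity of $f$ in $v$. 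The paper obtains the needed convergence by applying It\^{o}'s formula and the monotonicity condition (H5) to $|Y^{t,x;v}-Y^{t,x;v^n}|^2$, and then controlling the term $\int_t^T|f(\cdot,v^n_s)-f(\cdot,v_s)|^2ds$ by the dominated convergence theorem, with the dominating function supplied by (H4) and the polynomial growth (H6) together with the moment bounds of Propositions \ref{pz24} and \ref{pz27}. If you want to salvage your route, you should either execute the conditioning/selection argument in full detail, or switch to the step-control approximation and supply the dominated-convergence argument for continuity in $v$; without one of these, the proof is incomplete at its central point.
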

\begin{proof}
The idea to prove this lemma was initialed by Peng \cite{Peng}.
But since the absence of Lipschitz condition to $f(t,x,y,z,v)$ with respect to $y$ and $v$, some changes should be made in the proof.

To begin with, we denote by $(\mathscr{F}_{t,s})_{t \leq s \leq T}$ the nature filtration generated by $(B_s-B_t)_{0 \leq s \leq T}$ and define two subspaces of $\mathcal{U}$:
\begin{eqnarray*}
&&\mathcal{U}^t\triangleq \{v\in\mathcal{U}|\ v_s\ \text{is}\ \mathscr{F}_{t,s}-\text{measurable}\ \text{for}\ t\leq s\leq T\};\nonumber\\
&&\bar{\mathcal{U}}^t\triangleq \{v\in\mathcal{U}|\ v_s=\sum_{j=1}^{N}v^j_sI_{A_j},\ \text{where}\ v^j\in\mathcal{U}^t\ \text{and}\ \{A_j\}_{j=1}^N\ \text{is a partition of $(\Omega,\mathscr{F}_{t})$}\}.
\end{eqnarray*}

We first prove
\begin{equation}\label{pz15}
\esssup_{v\in \mathcal{U}}J(t,x;v)=\esssup_{v\in\bar{\mathcal{U}}^t}J(t,x;v).
\end{equation}

Noticing $\bar{\mathcal{U}}^t$ is a subspace of $\mathcal{U}$, we only need to prove that ``$\leq$" holds.
To see this, note that $\bar{\mathcal{U}}^t$ is dense in $\mathcal{U}$, so for any $v\in\mathcal{U}$, there exists a sequence $\{v^n\}_{n=1}^\infty\subset\bar{\mathcal{U}}^t$ such that
\begin{eqnarray*}
\lim_{n\to\infty}E[\int_t^T|v^n_s-v_s|^2ds]=0.
\end{eqnarray*}
Moreover, we can choose a subsequence from $\{v^n\}_{n=1}^\infty$, still denoted by $\{v^n\}_{n=1}^\infty$ without loss of any generality, which satisfies $$\lim\limits_{n\to\infty}v^n_s=v_s\ \ \ \ {\rm a.s.}$$
Applying It\^{o} formula to $|Y_s^{t,x;v}-Y_s^{t,x;v^n}|^2$, together with the monotonic condition and Gronwall's inequality, we have
\begin{eqnarray}\label{pz33}
&&E[|Y_t^{t,x;v^n}-Y_t^{t,x;v}|^2]\nonumber\\
&\leq&C_pE[\int_t^T|X_s^{t,x;v^n}-X_s^{t,x;v}|^2dr]\\
&&+C_pE[\int_t^T|f(s,X_s^{t,x;v},Y_s^{t,x;v},Z_s^{t,x;v},v^n_s)-f(s,X_s^{t,x;v},Y_s^{t,x;v},Z_s^{t,x;v},v_s)|^2].\nonumber
\end{eqnarray}
Here and in the rest of this paper $C_p$ is a generic constant depending only on given parameters and its values may change from line to line, moreover, we use a bracket immediately after $C_p$ to indicate the parameters it depends on when necessary.

By (\ref{pz33}) and Propositions \ref{pz24}, it turns out that
\begin{eqnarray}\label{pz13}
&&E[|Y_t^{t,x;v^n}-Y_t^{t,x;v}|^2]\nonumber\\
&\leq&C_pE[\int_t^T|v^n_s-v_s|^2ds]\\
&&+C_pE[\int_t^T|f(s,X_s^{t,x;v},Y_s^{t,x;v},Z_s^{t,x;v},v^n_s)-f(s,X_s^{t,x;v},Y_s^{t,x;v},Z_s^{t,x;v},v_s)|^2ds].\nonumber
\end{eqnarray}
Noticing Conditions (H3), (H4) and (H6), we know that
\begin{eqnarray*}
&&|f(s,X_s^{t,x;v},Y_s^{t,x;v},Z_s^{t,x;v},v^n_s)-f(s,X_s^{t,x;v},Y_s^{t,x;v},Z_s^{t,x;v},v_s)|^2\nonumber\\
&\leq&C_p(1+|X_s^{t,x;v}|^2+|Y_s^{t,x;v}|^{2p}+|Z_s^{t,x;v}|^2),
\end{eqnarray*}
which is integrable in $L^2(\Omega\times[t,T];\mathbb{R}^1)$ in view of Propositions \ref{pz24} and \ref{pz27}. Thus by the dominated control theorem it yields
\begin{eqnarray*}
\lim_{n\to\infty}E[\int_t^T|f(s,X_s^{t,x;v},Y_s^{t,x;v},Z_s^{t,x;v},v^n_s)-f(s,X_s^{t,x;v},Y_s^{t,x;v},Z_s^{t,x;v},v_s)|^2ds]=0.
\end{eqnarray*}
Hence, taking the limits on both sides of (\ref{pz13}) we have
\begin{eqnarray*}
\lim_{n\to\infty}E[|Y_t^{t,x;v^n}-Y_t^{t,x;v}|^2]=0.
\end{eqnarray*}
Consequently, there exists a subsequence of $\{v^n\}_{n=1}^\infty$, still denoted by $\{v^n\}_{n=1}^\infty$ without loss of any generality, such that
\begin{eqnarray*}
\lim_{n\to\infty}Y_t^{t,x;v^n}=Y_t^{t,x;v}\ \ \ \ {\rm a.s.}
\end{eqnarray*}
Due to the definition of cost functionals and the arbitrariness of $v\in\mathcal{U}$, we have
\begin{equation*}
\esssup_{v\in \mathcal{U}}J(t,x;v)\leq\esssup_{v\in\bar{\mathcal{U}}^t}J(t,x;v),
\end{equation*}
and then (\ref{pz15}) follows.

The next step is to prove
\begin{equation}\label{pz16}
\esssup_{v\in\bar{\mathcal{U}}^t}J(t,x;v)=\esssup_{v\in\mathcal{U}^t}J(t,x;v),
\end{equation}
for which the argument is similar to the classical case that $f(t,x,y,z,v)$ satisfies the Lipschitz condition with respect to $y$ and $v$. The reader can refer to \cite{Peng} or \cite{Wu} for details.

Therefore, (\ref{pz15}) and (\ref{pz16}) conclude
\begin{equation*}
\esssup_{v\in \mathcal{U}}J(t,x;v)=\esssup_{v\in\mathcal{U}^t}J(t,x;v),
\end{equation*}
which implies that $u$ defined in (\ref{eq10}) is a deterministic function.
\end{proof}

With Proposition \ref{pz27} we can also obtain two lemmas related to the value function. In fact, their proofs are very similar to the counterparts in \cite{Peng}, in which the estimates in Proposition \ref{pz27} are used but the Lipschitz conditions for $f(t,x,y,z,v)$ with respect to $y$ and $v$ are not needed any more. So we leave out the proofs here.

The first lemma claims the Lipschitz continuity and linear growth of the value function $u(t,x)$ with respect to $x$. 
\begin{lemma}\label{pz26}
For any $t\in[0,T]$, $x,x^\prime\in\mathbb{R}^n$, there exists a constant $C$ such that
\begin{enumerate}
\item [(i)] $|u(t,x)-u(t,x^\prime)|\leq C|x-x^\prime|$,
\item [(ii)] $|u(t,x)|\leq C(1+|x|)$.
\end{enumerate}
\end{lemma}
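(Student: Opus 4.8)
The plan is to establish Lemma \ref{pz26} by exploiting the estimates in Proposition \ref{pz27} together with the fact (Lemma \ref{pz1}) that $u$ is deterministic, so that I may write $u(t,x)=\esssup_{v\in\mathcal{U}}Y_t^{t,x;v}$. For part (i), I would fix $t\in[0,T]$ and $x,x'\in\mathbb{R}^n$, and for an arbitrary admissible $v\in\mathcal{U}$ compare $Y_t^{t,x;v}$ with $Y_t^{t,x';v}$ using the \emph{same} control $v$ on both sides. The second estimate of Proposition \ref{pz27}, namely $|Y_t^{t,\eta;v}-Y_t^{t,\eta';v}|\le C|\eta-\eta'|$ with deterministic initial data $\eta=x$, $\eta'=x'$, gives immediately
\begin{equation*}
|Y_t^{t,x;v}-Y_t^{t,x';v}|\le C|x-x'|
\end{equation*}
with a constant $C$ independent of $v$.

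The transfer from this pointwise-in-$v$ bound to the bound on the essential suprema is the step that deserves care, though it is standard. From the estimate above I get $Y_t^{t,x;v}\le Y_t^{t,x';v}+C|x-x'|$ for every $v$; taking $\esssup$ over $v\in\mathcal{U}$ on both sides and using that the right-hand constant is uniform yields $u(t,x)\le u(t,x')+C|x-x'|$. Interchanging the roles of $x$ and $x'$ gives the reverse inequality, and together they prove (i). The only subtlety here is that I must use the \emph{same} control $v$ in both value functions so that the difference estimate applies; this is legitimate precisely because $\mathcal{U}$ does not depend on the initial state $x$.

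For part (ii) I would apply the first estimate of Proposition \ref{pz27},
\begin{equation*}
|Y_t^{t,x;v}|^{2q}\le C\bigl(1+|x|^{2q}+E^{\mathscr{F}_t}[\textstyle\int_t^T|f(r,0,0,0,v_r)|^{2q}\,dr]\bigr),
\end{equation*}
taking $q=1$ and using that $v$ takes values in the compact set $U$. Since $f(r,0,0,0,\cdot)$ is continuous in its control argument (by (H3)) and $U$ is compact, the term $|f(r,0,0,0,v_r)|$ is bounded uniformly in $r$ and $v$ by some constant depending only on the data, so the conditional expectation is bounded by a constant. This gives $|Y_t^{t,x;v}|\le C(1+|x|)$ with $C$ independent of $v$, whence taking the essential supremum over $v$ yields $|u(t,x)|\le C(1+|x|)$.

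The main obstacle, such as it is, lies not in any hard estimate but in justifying the passage to the essential supremum while keeping the constant uniform in $v$; the content of the lemma is essentially repackaging the $v$-uniform constants of Proposition \ref{pz27}. As the authors note, the crucial point is that neither the Lipschitz condition in $y$ nor any regularity in $v$ is invoked — only the monotonicity-based estimates of Proposition \ref{pz27} and the compactness of $U$ — so the argument is indifferent to the non-Lipschitz nature of the aggregator. For this reason I would present the two parts compactly, relying on Proposition \ref{pz27} as a black box rather than redoing the It\^{o}-formula computation.
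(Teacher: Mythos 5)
Your proposal is correct and is essentially the argument the paper has in mind: the authors omit the proof, noting only that it follows the counterpart in Peng's work using the estimates of Proposition \ref{pz27}, which is precisely what you do (the second estimate of Proposition \ref{pz27} for (i), the first estimate plus compactness of $U$ and continuity of $f$ in $v$ for (ii), then passing to the essential supremum with $v$-uniform constants). No gaps.
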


The other lemma connects the cost functional with the solution of BSDE (\ref{pz25}), where the state is a random variable.
\begin{lemma}\label{pz28}
For any $t\in[0,T]$, $v\in\mathcal{U}$, $\eta\in L^{2}(\Omega,\mathscr{F}_t;\mathbb{R}^n)$, we have
\begin{equation*}
J(t,\eta;v)=Y_t^{t,\eta;v}.
\end{equation*}
\end{lemma}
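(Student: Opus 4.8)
The plan is to prove the identity first for step-valued initial data and then pass to the limit, exactly as the corresponding lemmas in \cite{Peng} and \cite{Wu} are handled, using that every estimate we need is already packaged in Propositions \ref{pz24} and \ref{pz27}. For a simple random variable $\eta=\sum_{i=1}^N x_iI_{A_i}$ with $x_i\in\mathbb{R}^n$ and $\{A_i\}_{i=1}^N$ a partition of $\Omega$ into $\mathscr{F}_t$-measurable sets, the cost functional is read as $J(t,\eta;v)=\sum_{i=1}^N I_{A_i}J(t,x_i;v)=\sum_{i=1}^N I_{A_i}Y_t^{t,x_i;v}$. The first step is to establish the local property $I_{A_i}X_s^{t,\eta;v}=I_{A_i}X_s^{t,x_i;v}$ for $s\in[t,T]$ a.s. Since $A_i\in\mathscr{F}_t\subseteq\mathscr{F}_s$ for every $s\geq t$, the factor $I_{A_i}$ is predictable on $[t,T]$ and may be moved inside both the drift and the stochastic integrals of SDE (\ref{pz32}); multiplying (\ref{pz32}) by $I_{A_i}$ and invoking the pathwise uniqueness from Proposition \ref{pz24} yields the claim.

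The same localization is then carried through the BSDE (\ref{pz25}). Because $h$ and $f$ are deterministic and $I_{A_i}X^{t,\eta;v}=I_{A_i}X^{t,x_i;v}$, multiplying (\ref{pz25}) by $I_{A_i}$ shows that $(I_{A_i}Y^{t,\eta;v},I_{A_i}Z^{t,\eta;v})$ and $(I_{A_i}Y^{t,x_i;v},I_{A_i}Z^{t,x_i;v})$ solve the same localized BSDE, so the uniqueness part of Proposition \ref{pz27} gives $I_{A_i}Y_s^{t,\eta;v}=I_{A_i}Y_s^{t,x_i;v}$ for all $s\in[t,T]$. Summing over $i$ and evaluating at $s=t$ produces $Y_t^{t,\eta;v}=\sum_i I_{A_i}Y_t^{t,x_i;v}=J(t,\eta;v)$, which settles the lemma for simple $\eta$.

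For general $\eta\in L^2(\Omega,\mathscr{F}_t;\mathbb{R}^n)$, I would approximate it by simple $\mathscr{F}_t$-measurable $\eta_n\to\eta$ in $L^2$. The decisive ingredient is the Lipschitz dependence on the initial value $|Y_t^{t,\eta_n;v}-Y_t^{t,\eta;v}|\leq C|\eta_n-\eta|$ supplied by Proposition \ref{pz27}, whose constant depends only on $L,\lambda,\mu,\kappa,T$ and therefore remains valid in our monotone, non-Lipschitz setting. This makes $Y_t^{t,\eta_n;v}$ Cauchy, hence convergent to $Y_t^{t,\eta;v}$ in $L^2$ and, along a subsequence, a.s.; it simultaneously shows that the sequence $J(t,\eta_n;v)=Y_t^{t,\eta_n;v}$ converges, so that the extension of $J(t,\cdot;v)$ to random initial data (the $L^2$-limit, consistent with the pointwise substitution $\omega\mapsto J(t,\eta(\omega);v)(\omega)$) is well defined and equals this same limit. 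Passing to the limit in $J(t,\eta_n;v)=Y_t^{t,\eta_n;v}$ then gives $J(t,\eta;v)=Y_t^{t,\eta;v}$.

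The step I expect to be the main obstacle is the clean justification of the localization $I_{A_i}Y^{t,\eta;v}=I_{A_i}Y^{t,x_i;v}$ together with the bookkeeping that makes $J(t,\eta;v)$ well defined for genuinely random (not merely step-valued) $\eta$: one must check that the factors $I_{A_i}$ commute with both the Lebesgue and It\^o integrals on $[t,T]$ (using $A_i\in\mathscr{F}_t$) and that the deterministic-coefficient structure of $b,\sigma,h,f$ is precisely what forces the two equations to agree on $A_i$. By contrast, the non-Lipschitz character of $f$ in $y$ and $v$ introduces no extra difficulty here, since the control $v$ is held fixed on both sides and the only analytic estimate invoked, the Lipschitz bound in $\eta$, is already guaranteed by Proposition \ref{pz27}.
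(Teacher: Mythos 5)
Your proposal is correct and follows exactly the route the paper intends: the paper omits this proof, stating only that it is ``very similar to the counterparts in \cite{Peng}'' with the Lipschitz-in-$y$ hypotheses replaced by the estimates of Proposition \ref{pz27}, and your argument (localization over an $\mathscr{F}_t$-partition for simple $\eta$ via uniqueness, then density of simple random variables combined with the bound $|Y_t^{t,\eta;v}-Y_t^{t,\eta';v}|\leq C|\eta-\eta'|$) is precisely that standard argument. The one point you rightly flag --- justifying $I_{A_i}Y^{t,\eta;v}=I_{A_i}Y^{t,x_i;v}$ --- is most cleanly handled by noting that the conditional estimates of Propositions \ref{pz24} and \ref{pz27} vanish on $A_i$ where $\eta=x_i$, which is equivalent to your indicator manipulation.
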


Moreover, we need the following lemma  which plays a big role in the proof of dynamic programming principle.
\begin{lemma}\label{pz29}
For any $t\in[0,T]$, $v\in\mathcal{U}$, $\eta\in L^{2}(\Omega,\mathscr{F}_t;\mathbb{R}^n)$, we have
\begin{equation}\label{pz30}
u(t,\eta)\geq Y_t^{t,\eta;v}\ \ \ \ {\rm a.s.}
\end{equation}
On the other hand, for any $\varepsilon>0$, there exists an admissible control $v\in\mathcal{U}$ such that
\begin{equation}\label{pz31}
u(t,\eta)\leq Y_t^{t,\eta;v}+\varepsilon\ \ \ \ {\rm a.s.}
\end{equation}
\end{lemma}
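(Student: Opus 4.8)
The plan is to prove the two inequalities separately, in each case reducing the random initial datum $\eta$ to deterministic ones and then invoking the continuity estimates already at hand: the Lipschitz continuity of $u(t,\cdot)$ from Lemma \ref{pz26} and the estimate $|Y_t^{t,\eta;v}-Y_t^{t,\eta^\prime;v}|\le C|\eta-\eta^\prime|$ from Proposition \ref{pz27}. The cornerstone for both directions is a localization property: if $A\in\mathscr{F}_t$ and two data/control pairs coincide on $A$, then the corresponding solutions of the SDE \eqref{pz32} and the BSDE \eqref{pz25} coincide on $A$ at every time; in particular $I_A Y_t^{t,\eta;v}=I_A Y_t^{t,\eta^\prime;v^\prime}$ whenever $I_A\eta=I_A\eta^\prime$ and $I_A v=I_A v^\prime$. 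This follows by multiplying the equations by $I_A$ and invoking the uniqueness assertions of Propositions \ref{pz24} and \ref{pz27}.

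For the inequality \eqref{pz30} I would first treat a simple datum $\eta=\sum_{i=1}^N x_i I_{A_i}$, where $\{A_i\}_{i=1}^N$ is an $\mathscr{F}_t$-partition of $\Omega$ and $x_i\in\mathbb{R}^n$. On each $A_i$ the localization property gives $Y_t^{t,\eta;v}=Y_t^{t,x_i;v}=J(t,x_i;v)$, while $u(t,x_i)=\esssup_{w\in\mathcal{U}}J(t,x_i;w)\ge J(t,x_i;v)$ a.s.\ and $u(t,\eta)=u(t,x_i)$ on $A_i$; hence $Y_t^{t,\eta;v}\le u(t,\eta)$ on each $A_i$, and therefore a.s. For a general $\eta\in L^2(\Omega,\mathscr{F}_t;\mathbb{R}^n)$ I would pick simple $\eta_N\to\eta$ in $L^2$ and, along a subsequence, a.s., apply the simple case to get $Y_t^{t,\eta_N;v}\le u(t,\eta_N)$, and pass to the limit using $|u(t,\eta_N)-u(t,\eta)|\le C|\eta_N-\eta|$ and $|Y_t^{t,\eta_N;v}-Y_t^{t,\eta;v}|\le C|\eta_N-\eta|$.

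For the $\varepsilon$-optimality \eqref{pz31} I would first observe that, setting $A=\{J(t,x;w^1)\ge J(t,x;w^2)\}\in\mathscr{F}_t$ and $w^3=w^1 I_A+w^2 I_{A^c}$, the localization property yields $J(t,x;w^3)=J(t,x;w^1)\vee J(t,x;w^2)$, so the family $\{J(t,x;w):w\in\bar{\mathcal{U}}^t\}$ is directed upward and the essential supremum defining $u(t,x)$ is the a.s.\ limit of an increasing sequence; consequently, for each fixed $x$ there is a control that is $\varepsilon/2$-optimal a.s. I would then fix $r>0$, take a countable Borel partition $\{O_i\}$ of $\mathbb{R}^n$ with $\mathrm{diam}(O_i)\le r$, choose reference points $x_i\in O_i$ and controls $v^i\in\bar{\mathcal{U}}^t$ with $u(t,x_i)\le Y_t^{t,x_i;v^i}+\varepsilon/2$ a.s., and glue them by $v\triangleq\sum_i v^i I_{\{\eta\in O_i\}}$. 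Since the sets $\{\eta\in O_i\}\in\mathscr{F}_t$ form a partition of $\Omega$ and each $v^i$ is $U$-valued, one checks $v\in\mathcal{U}$, and on $\{\eta\in O_i\}$ the localization property gives $Y_t^{t,\eta;v}=Y_t^{t,\eta;v^i}$. Writing, on $\{\eta\in O_i\}$,
\[
u(t,\eta)-Y_t^{t,\eta;v}=\big(u(t,\eta)-u(t,x_i)\big)+\big(u(t,x_i)-Y_t^{t,x_i;v^i}\big)+\big(Y_t^{t,x_i;v^i}-Y_t^{t,\eta;v^i}\big),
\]
I would bound the first and third brackets by $C|\eta-x_i|\le Cr$ (via Lemma \ref{pz26} and Proposition \ref{pz27}) and the middle bracket by $\varepsilon/2$, and finally choose $r$ so small that $2Cr\le\varepsilon/2$ to obtain \eqref{pz31}.

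The step I expect to be the main obstacle is the second inequality, where a single admissible control must be simultaneously nearly optimal for every possible value of the random state $\eta$. The delicate points are the upward-directedness that legitimises the existence of $\varepsilon$-optimal controls in the essential-supremum sense, and the verification that the countable gluing $\sum_i v^i I_{\{\eta\in O_i\}}$ is genuinely admissible and interacts correctly with the BSDE through localization. In the present non-Lipschitz framework these arguments succeed precisely because the estimates of Proposition \ref{pz27} and Lemma \ref{pz26} hold with no Lipschitz assumption on $f$ in $y$ or $v$, which is exactly where those estimates enter.
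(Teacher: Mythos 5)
Your proposal is correct and follows the same overall strategy as the paper: prove both inequalities for piecewise-constant initial data by localizing on an $\mathscr{F}_t$-partition, then transfer to general $\eta$ via the Lipschitz estimates of Lemma \ref{pz26} and Proposition \ref{pz27}; your treatment of \eqref{pz30} is essentially identical to the paper's. For \eqref{pz31} you differ in two technical respects, both defensible and arguably cleaner. First, the paper obtains the $\varepsilon$-optimal control at each deterministic point $x_j$ by taking $v_j\in\mathcal{U}^t$, implicitly relying on Lemma \ref{pz1} (for $v\in\mathcal{U}^t$ the quantity $Y_t^{t,x_j;v}$ is deterministic, so the essential supremum reduces to an ordinary supremum of real numbers); you instead invoke upward-directedness of the family $\{J(t,x;w)\}$, which is the standard alternative and equally valid --- note that one of these two justifications is genuinely needed, since an essential supremum over a non-directed family need not be approached almost surely by a single member. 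Second, the paper approximates a general $\eta$ uniformly by a single finitely-valued simple random variable $\zeta$ with $|\zeta-\eta|<\varepsilon/(3C)$, which is not literally possible when $\eta$ is unbounded; your countable Borel partition $\{O_i\}$ with $\mathrm{diam}(O_i)\le r$ and the gluing $v=\sum_i v^i I_{\{\eta\in O_i\}}$ circumvents this, at the cost of having to verify (as you correctly flag) that a countable concatenation of admissible controls is admissible and that the localization property extends to countable $\mathscr{F}_t$-partitions; both follow from the uniqueness assertions of Propositions \ref{pz24} and \ref{pz27} restricted to each $\{\eta\in O_i\}$ together with the uniform boundedness of $U$. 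In short, your route is the paper's route with the gluing step carried out more carefully.
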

\begin{proof}
We first prove that Lemma \ref{pz29} holds for any simple random state variable $\zeta=\sum\limits_{j=1}^{N}x_iI_{A_j}$, where $N\in\mathbb{N}$, $x_j\in\mathbb{R}^n$ and $\{A_j\}_{j=1}^N$ is a partition of $(\Omega,\mathscr{F}_{t})$.

For any $v\in\mathcal{U}$, since
\begin{equation*}
Y_t^{t,\zeta;v}=\sum_{j=1}^NY_t^{t,x_j;v}I_{A_j}\leq\sum_{j=1}^Nu(t,x_j)I_{A_j}=u(t,\zeta),
\end{equation*}
(\ref{pz30}) is true for the simple random variables. To prove (\ref{pz31}), we notice that for each $x_j$, there exists an admissible control $v_j\in\mathcal{U}^t$ such that
$$u(t,x_j)\leq Y_t^{t,x_j;v_j}+\varepsilon.$$
Hence taking $v=\sum\limits_{j=1}^Nv_jI_{A_j}\in\mathcal{U}$ we have
\begin{equation*}
Y_t^{t,\zeta;v}+\varepsilon=\sum_{j=1}^N(Y_t^{t,x_j;v}+\varepsilon)I_{A_j}\geq\sum_{j=1}^Nu(t,x_j)I_{A_j}=u(t,\zeta).
\end{equation*}
That is to say that both (\ref{pz30}) and (\ref{pz31}) are satisfied for the simple random state variables.

For any random state variable $\eta\in L^{2}(\Omega,\mathscr{F}_t;\mathbb{R}^n)$, there exists a sequence of simple random variables $\{\zeta\}_{n=1}^{\infty}$ such that $$\lim\limits_{n\to\infty}|\zeta_n-\eta|=0.$$
By Proposition \ref{pz27} and Lemma \ref{pz26} we have
%
for any $v\in\mathcal{U}$,
\begin{eqnarray*}
\lim_{n\to\infty}|Y_t^{t,\zeta_{n};v}-Y_t^{t,\eta;v}|=0\ \ \text{a.s.}\ \ \ \text{and}\ \ \ \lim_{n\to\infty}|u(t,\zeta_{n})-u(t,\eta)|=0\ \ \text{a.s.}
\end{eqnarray*}
Since $Y_t^{t,\zeta_{n};v}\leq u(t,\zeta_{n})$ holds for all $n$, (\ref{pz30}) follows for $\eta$ as $n\to\infty$.

Also (\ref{pz31}) is true for any random state variable $\eta\in L^{2}(\Omega,\mathscr{F}_t;\mathbb{R}^n)$. To demonstrate this, we choose a simple random variable $\zeta$ such that $|\zeta-\eta|<{\varepsilon\over{3C}}$. In view of Proposition \ref{pz27} and Lemma \ref{pz26} again it yields that for any $v\in\mathcal{U}^t$,
\begin{eqnarray*}
|Y_t^{t,\zeta;v}-Y_t^{t,\eta;v}|\leq{\varepsilon\over3}\ \ \text{and}\ \ |u(t,\zeta)-u(t,\eta)|\leq{\varepsilon\over3}.
\end{eqnarray*}
Note that since $\zeta$ is a simple random variable there exists an admissible control $\tilde{v}\in\mathcal{U}$ such that
\begin{equation*}
Y_t^{t,\zeta;\tilde{v}}+{\varepsilon\over3}\geq u(t,\zeta).
\end{equation*}
Hence
\begin{equation*}
Y_t^{t,\eta;\tilde{v}}\geq-|Y_t^{t,\zeta;\tilde{v}}-Y_t^{t,\eta;\tilde{v}}|+Y_t^{t,\zeta;\tilde{v}}\geq u(t,\zeta)-{{2\varepsilon}\over3}
\geq u(t,\eta)-\varepsilon,
\end{equation*}
which puts an end of proof for Lemma \ref{pz29}.
\end{proof}

Now we are well prepared to prove the dynamic programming principle in our settings.
 \begin{theorem}{\bf{(Dynamic programming principle with non-Lipschitz aggregator)}}\label{dp}
Assume 
Conditions (H1)--(H6). Then for any $0 \leq \delta \leq T-t$, the value function $u(t,x)$ has the following property:
 \begin{equation*}\label{eq25}
 u(t,x)= \esssup_{v\in \mathcal{U}}G_{t,t+\delta}^{t,x;v}[u(t+\delta, X_{t+\delta}^{t,x;v})].
 \end{equation*}
 \end{theorem}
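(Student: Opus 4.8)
The plan is to prove the two inequalities separately, using $u(t,x)=\esssup_{v\in\mathcal{U}}Y_t^{t,x;v}$ (Lemma \ref{pz28}) and the consistency of the backward semigroup $G_{t,t+\delta}^{t,x;v}[Y_{t+\delta}^{t,x;v}]=Y_t^{t,x;v}$, which is immediate from the uniqueness of the solution of BSDE \eqref{pz25} restricted to $[t,t+\delta]$. The two workhorses will be the generalized comparison theorem (Theorem \ref{pz2}) and the two-sided approximation in Lemma \ref{pz29}. Before invoking Theorem \ref{pz2} I would record that, along a fixed state path and control, the driver $f(r,X_r^{t,x;v},y,z,v_r)$ meets its hypotheses: condition (H5) yields $\text{sgn}(y_1-y_2)\big(f(\cdots,y_1,\cdots)-f(\cdots,y_2,\cdots)\big)\le \mu|y_1-y_2|$, so (A1) holds with the linear (hence concave, reciprocal-integrable) function $\rho(u)=(\mu^{+}+1)u$, while (H4) gives (A2) with $\phi(u)=\lambda u$. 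Consequently the semigroup $G_{t,t+\delta}^{t,x;v}[\cdot]$ is monotone in its terminal datum, and $u(t+\delta,\cdot)$ is an admissible terminal value, being Lipschitz and of linear growth by Lemma \ref{pz26} while $X_{t+\delta}^{t,x;v}$ has moments of every order by Proposition \ref{pz24}.

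For the inequality $u(t,x)\le \esssup_{v}G_{t,t+\delta}^{t,x;v}[u(t+\delta,X_{t+\delta}^{t,x;v})]$ I would fix $v\in\mathcal{U}$. By the flow property (uniqueness in Propositions \ref{pz24} and \ref{pz27}) one has $Y_{t+\delta}^{t,x;v}=Y_{t+\delta}^{t+\delta,X_{t+\delta}^{t,x;v};v}$, so the first part of Lemma \ref{pz29} applied at time $t+\delta$ with state $X_{t+\delta}^{t,x;v}$ gives $u(t+\delta,X_{t+\delta}^{t,x;v})\ge Y_{t+\delta}^{t,x;v}$ a.s. Feeding this into the monotone semigroup and using consistency,
\begin{equation*}
G_{t,t+\delta}^{t,x;v}[u(t+\delta,X_{t+\delta}^{t,x;v})]\ge G_{t,t+\delta}^{t,x;v}[Y_{t+\delta}^{t,x;v}]=Y_t^{t,x;v},
\end{equation*}
and taking $\esssup_{v}$ yields the claim.

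For the reverse inequality, fix $v\in\mathcal{U}$ and $\varepsilon>0$. The second part of Lemma \ref{pz29} at time $t+\delta$ produces $v^{\varepsilon}\in\mathcal{U}$ with $u(t+\delta,X_{t+\delta}^{t,x;v})\le Y_{t+\delta}^{t+\delta,X_{t+\delta}^{t,x;v};v^{\varepsilon}}+\varepsilon$ a.s. I then paste controls, setting $\bar v=v$ on $[t,t+\delta)$ and $\bar v=v^{\varepsilon}$ on $[t+\delta,T]$; then $\bar v\in\mathcal{U}$, the flow property gives $Y_{t+\delta}^{t,x;\bar v}=Y_{t+\delta}^{t+\delta,X_{t+\delta}^{t,x;v};v^{\varepsilon}}$, and since $\bar v=v$ on $[t,t+\delta]$ one has $G_{t,t+\delta}^{t,x;v}[Y_{t+\delta}^{t,x;\bar v}]=Y_t^{t,x;\bar v}\le u(t,x)$. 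Monotonicity of the semigroup gives $G_{t,t+\delta}^{t,x;v}[u(t+\delta,X_{t+\delta}^{t,x;v})]\le G_{t,t+\delta}^{t,x;v}[Y_{t+\delta}^{t,x;\bar v}+\varepsilon]$, so it remains to absorb the constant shift. For this I would compare the latter with the explicit supersolution $\tilde Y_s=Y_s^{t,x;\bar v}+\varepsilon e^{\mu^{+}(t+\delta-s)}$ (keeping the same $Z$): using (H5) one checks that its driver dominates $f(s,X_s^{t,x;v},\tilde Y_s,Z_s^{t,x;\bar v},v_s)$, so Theorem \ref{pz2} yields $G_{t,t+\delta}^{t,x;v}[Y_{t+\delta}^{t,x;\bar v}+\varepsilon]\le Y_t^{t,x;\bar v}+\varepsilon e^{\mu^{+}\delta}\le u(t,x)+\varepsilon e^{\mu^{+}T}$. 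Taking $\esssup_{v}$ and then letting $\varepsilon\downarrow0$ closes the proof.

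The hard part is this last step: without Lipschitz continuity of $f$ in $y$ one cannot invoke the usual stability of the semigroup under an $\varepsilon$-perturbation of the terminal datum. The remedy is to convert the merely monotone structure (H5) into an \emph{almost sure} bound by exhibiting the explicit supersolution above and appealing to the generalized comparison theorem, rather than to an $L^2$-type stability estimate. The other point requiring care is to verify that (H4)--(H6) indeed place the path-wise driver inside the hypothesis class (A1)--(A2) of Theorem \ref{pz2}, which is what legitimizes the monotonicity of the backward semigroup used throughout.
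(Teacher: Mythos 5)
Your proposal is correct and follows the same overall architecture as the paper's proof: split into two inequalities, use the semigroup consistency $G_{t,t+\delta}^{t,x;v}[Y_{t+\delta}^{t,x;v}]=Y_t^{t,x;v}$, invoke Lemma \ref{pz29} at time $t+\delta$ for both directions, and rely on the generalized comparison theorem (Theorem \ref{pz2}) in place of the classical one. Your explicit verification that (H5) places the driver in class (A1) with $\rho(u)=(\mu^{+}+1)u$ and that (H4) gives (A2) with $\phi(u)=\lambda u$, as well as your check of integrability of the terminal datum $u(t+\delta,X_{t+\delta}^{t,x;v})$ via Lemma \ref{pz26} and Proposition \ref{pz24}, are points the paper handles more tersely (the paper gets the square integrability instead by sandwiching $u(s,X_s^{t,x;v})$ between $Y_s^{s,X_s^{t,x;v};\tilde v}$ and $Y_s^{s,X_s^{t,x;v};\tilde v}+\varepsilon$ and estimating the latter).

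The one step where you genuinely diverge is the absorption of the additive $\varepsilon$ in the terminal datum. The paper applies It\^{o}'s formula to ${\rm e}^{-ks}|Y_s^{1;v}-Y_s^{2;v}|^2$ with $k$ large, uses the monotonicity (H5) in place of Lipschitz continuity, and obtains the conditional $L^2$ bound $|Y_t^{1;v}-Y_t^{2;v}|\le\sqrt{C_p}\,\varepsilon$, whence
\begin{equation*}
\esssup_{v\in\mathcal{U}}G_{t,t+\delta}^{t,x;v}[u(t+\delta,X_{t+\delta}^{t,x;v})]\le\esssup_{v\in\mathcal{U}}G_{t,t+\delta}^{t,x;v}[u(t+\delta,X_{t+\delta}^{t,x;v})-\varepsilon]+\sqrt{C_p}\,\varepsilon.
\end{equation*}
You instead exhibit the explicit supersolution $\tilde Y_s=Y_s^{t,x;\bar v}+\varepsilon{\rm e}^{\mu^{+}(t+\delta-s)}$ (with unchanged $Z$), check via (H5) that its driver dominates, and apply Theorem \ref{pz2} a second time to get the pathwise bound $G_{t,t+\delta}^{t,x;v}[Y_{t+\delta}^{t,x;\bar v}+\varepsilon]\le Y_t^{t,x;\bar v}+\varepsilon{\rm e}^{\mu^{+}\delta}$. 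Both arguments are valid under (H4)--(H5); your supersolution route yields an almost-sure estimate directly and avoids a separate It\^{o} computation, at the cost of a second appeal to the comparison theorem, while the paper's energy estimate is the more standard BSDE stability bound adapted to the monotone setting. Either way the constant is harmless and the conclusion follows on letting $\varepsilon\downarrow 0$.
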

\begin{proof}
First of all, by the definition of notation and the uniqueness of solution of BSDE (\ref{eq5a}) we have
 \begin{equation*}\label{eq26}
 \begin{aligned}
 u(t,x)&=\esssup_{v\in \mathcal{U}}G_{t,T}^{t,x;v}[h(X_T^{t,x;v})]\\
 &=\esssup_{v\in \mathcal{U}}G_{t,t+\delta}^{t,x;v}[Y_{t+\delta}^{t,x;v}]\\
&=\esssup_{v\in \mathcal{U}}G_{t,t+\delta}^{t,x;v}[Y_{t+\delta}^{t+\delta, X_{t+\delta}^{t,x;v};v}].
\end{aligned}
\end{equation*}

Then we need use the comparison theorem of BSDE in the next step. Bear in mind that the aggregator of BSDE (\ref{eq5a}) is not Lipschitz with respect to the first unknown variable, so the classical comparison theorem does not work. Instead, we apply the generalized comparison theorem with  ``weakly" monotonic aggregator (Theorem \ref{pz2}) to our case. But before the application of this generalized comparison theorem we need first show that $u(s,X_s^{t,x;v})$ for $t\leq s\leq T$ and $v\in\mathcal{U}$ is square integrable which acts as the terminal value of BSDE. To see this, note that for any $\varepsilon>0$, by Lemma \ref{pz29} there exists $\tilde{v}\in\mathcal{U}$ such that $$Y_s^{s,X_s^{t,x;v};\tilde{v}}\leq u(s,X_s^{t,x;v})\leq Y_s^{s,X_s^{t,x;v};\tilde{v}}+\varepsilon,$$ so we only need to prove
$E[|Y_s^{s,X_s^{t,x;v};\tilde{v}}|^2]<\infty$. Noticing the uniform boundedness of the control processes in $\mathcal{U}$ we use Propositions \ref{pz24} and \ref{pz27} to know that
\begin{eqnarray*}
E[|Y_s^{s,X_s^{t,x;v};\tilde{v}}|^2]&\leq&C_p\big(1+E[|X_s^{t,x;v}|^{2}+\int_s^T|f(r,0,0,0,\tilde{v}_r)|^2dr]\big)\nonumber\\
&\leq&C_p\big(1+E[|x|^{2}+\int_t^T|v_r|^2dr+\int_s^T|f(r,0,0,0,\tilde{v}_r)|^2dr]\big)<\infty.
\end{eqnarray*}
Hence the application of the Theorem \ref{pz2}, together with the definition of the value function, yields
 \begin{equation}\label{eq28}
 u(t,x)\leq \esssup_{v\in \mathcal{U}} G_{t,t+\delta}^{t,x;v}[u(t+\delta,X_{t+\delta}^{t,x;v})].
 \end{equation}

On the other hand, according to Lemma \ref{pz29}, for arbitrary $\varepsilon$, there exists an admissible control $\bar{v}\in \mathcal{U}$ such that
 \begin{equation}\label{eq29}
 u(t+\delta,X_{t+\delta}^{t,x;v}) \leq Y_{t+\delta} ^{t+\delta,X_{t+\delta}^{t,x;v}; \bar{v}}+\varepsilon.
 \end{equation}
Hence we have
  \begin{equation}\label{eq30}
  \begin{aligned}
  u(t,x)&\geq\esssup_{v\in \mathcal{U}}G_{t,t+\delta}^{t,x;v}[Y_{t+\delta} ^{t+\delta,X_{t+\delta}^{t,x;v};\bar{v}}]\\
  &\geq\esssup_{v\in \mathcal{U}}G_{t,t+\delta}^{t,x;v}[u(t+\delta,X_{t+\delta}^{t,x;v})-\varepsilon]\\
  & \geq \esssup_{v\in \mathcal{U}}G_{t,t+\delta}^{t,x;v}[u(t+\delta,X_{t+\delta}^{t,x;v})]- \sqrt{C_p}\varepsilon,
\end{aligned}
\end{equation}
with a constant $C_p$. 
Here
  the second inequality in \eqref{eq30} is based on \eqref{eq29} and the comparison theorem, and the last inequality comes from a basic estimate of BSDE. To see this,
we set $Y_t^{1;v}=G_{t,t+\delta}^{t,x;v}[u(t+\delta,X_{t+\delta}^{t,x;v})]$ and $Y_t^{2;v}=G_{t,t+\delta}^{t,x;v}[u(t+\delta,X_{t+\delta}^{t,x;v})-\varepsilon]$. Applying It\^{o}'s formula to ${\rm e}^{-ks}|Y_s^{1;v}-Y_s^{2;v}|^2$, where $t\leq s\leq t+\delta$ and $k>0$ is a sufficiently large constant, we have
  \begin{equation*}
  |Y_t^{1;v}-Y_t^{2;v}|^2 \leq C_pE^{\mathscr{F}_t} [|u(t+\delta,X_{t+\delta}^{t,x;v})-\varepsilon-u(t+\delta,X_{t+\delta}^{t,x;v})|^2]=C_pE^{\mathscr{F}_t} [\varepsilon^2].
  \end{equation*}
  Thus
  \begin{equation*}
  \esssup_{v\in \mathcal{U}} Y_t^{1;v}-\esssup_{v\in \mathcal{U}} Y_t^{2;v} \leq\esssup_{v\in \mathcal{U}}(Y_t^{1;v}-Y_t^{2;v}) \leq\esssup_{v\in \mathcal{U}}  |Y_t^{1;v}-Y_t^{2;v}| \leq \sqrt{C_p}\varepsilon,
  \end{equation*}
  which implies
  \begin{equation*}
  \esssup_{v\in \mathcal{U}} Y_t^{1;v}\leq \esssup_{v\in \mathcal{U}} Y_t^{2;v}+ \sqrt{C_p}\varepsilon,
  \end{equation*}
  i.e.
  \begin{equation*}
  \esssup_{v\in \mathcal{U}} G_{t,t+\delta}^{t,x;v}[u(t+\delta,X_{t+\delta}^{t,x;v})] \leq  \text{esssup}_{v\in \mathcal{U}} G_{t,t+\delta}^{t,x;v}[u(t+\delta,X_{t+\delta}^{t,x;v})-\varepsilon]+ \sqrt{C_p}\varepsilon.
  \end{equation*}

Therefore, the dynamic programming follows from \eqref{eq28} and \eqref{eq30}, due to the arbitrariness of $\varepsilon$ in \eqref{eq30}.
 \end{proof}

\section{Viscosity solution of HJB equation}
We aim to establish the connection in this section between the value function of our concerned stochastic recursive control problem and the viscosity solution of its corresponding HJB equation. For this, we need to assume that the aggregator of BSDE in our concerned recursive control problem is independent of the second unknown variable throughout Section 4, i.e. $f(t,x,y,z,v)=f(t,x,y,v)$ for $f$ in BSDE (\ref{eq5a}).

In this situation the HJB equation, a second-order fully nonlinear PDE of parabolic type, has a form:
\begin{numcases}{}\label{eq31}
{{\partial}\over{\partial t}}u+ H(t,x,u,D_{x}u, D^2_{x}u)=0,\ \ \ \ \ \ (t,x) \in [0,T) \times \mathbb{R}^n,\nonumber\\
u(T,x)=h(x).
\end{numcases}
Here $D_{x}u$  and $D^2_{x}u$ denote the gradient matrix and the Hessian matrix of $u$, respectively.  The Hamiltonian $H=H(t,x,r,p,A):[0,T]\times \mathbb{R}^n  \times \mathbb{R}^1 \times \mathbb{R}^n \times \mathbb{S}^n \rightarrow \mathbb{R}^1$ is defined as below:
 \begin{equation*}\label{eq33}
   H\triangleq\sup_{v \in U} \{\frac{1}{2}Tr(\sigma(t,x,v)\sigma^*(t,x,v)A)+ \langle p,b(t,x,v)\rangle+ f(t,x,r,v)\},
  \end{equation*}
where $\mathbb{S}^n$ is the matrix space including all $n \times n$ symmetric matrices.

Denote by $C^{1,2}([0,T]\times \mathbb{R}^n;\mathbb{R}^1)$ the space of all functions from $[0,T]\times \mathbb{R}^n$ to $\mathbb{R}^1$ whose derivatives up to the first order with respect to time variable and up to the second order with respect to state variable are continuous. Then we recall the definition for the viscosity solution of HJB equation \eqref{eq31}.
\begin{definition}
A continuous function $u:[0,T] \times \mathbb{R}^n\to\mathbb{R}^1$ is a viscosity subsolution (resp. supersolution) of HJB equation \eqref{eq31}, if for any $x \in \mathbb{R}^n$,  $u(T,x)  \leq h(x)$ (resp. $u(T,x)  \geq h(x)$), and for any $\varphi \in C^{1,2}([0,T]\times \mathbb{R}^n;\mathbb{R}^1)$, $(t,x) \in [0,T) \times \mathbb{R}^n$, $\varphi-u$ attains a global minimum (resp. maximum) at $(t,x)$ and $\varphi$ satisfies
  \begin{eqnarray*}
  && {{\partial}\over{\partial t}}\varphi(t,x) + H(t,x,\varphi, D_{x}\varphi, D^2_{x}\varphi) \geq 0 \\ \label{eq35}
  && \big(resp. ~~{{\partial}\over{\partial t}}\varphi(t,x) + H(t,x,\varphi, D_{x}\varphi, D^2_{x}\varphi) \leq 0\big).\label{eq36}
  \end{eqnarray*}
We call $u$ the viscosity solution of \eqref{eq31} if $u$ is both a viscosity subsolution and a viscosity supersolution.
\end{definition}
We need some preliminaries to establish the connection. First, we indicate the continuity of the value function.
\begin{prop}\label{propofcontinous}
Assume 
Conditions (H1)--(H6). Then the value function $u(t,x):[0,T]\times \mathbb{R}^n\to\mathbb{R}^1$ defined in (\ref{eq10}) is continuous with respect to $(t,x)$.
\end{prop}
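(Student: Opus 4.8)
The plan is to combine the spatial regularity already in hand with a short-horizon argument based on the dynamic programming principle (Theorem \ref{dp}). Lemma \ref{pz26}(i) gives $|u(t,x)-u(t,x')|\le C|x-x'|$ with $C$ independent of $t$, so for $(t,x)$ near $(t_0,x_0)$ one has $|u(t,x)-u(t_0,x_0)|\le C|x-x_0|+|u(t,x_0)-u(t_0,x_0)|$; hence it suffices to show that $t\mapsto u(t,x)$ is continuous, with a modulus that is locally uniform in $x$. I would establish this for $t_1<t_2$, writing $\delta=t_2-t_1$ and estimating $|u(t_1,x)-u(t_2,x)|$.

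By Theorem \ref{dp}, $u(t_1,x)=\esssup_{v\in\mathcal U}G^{t_1,x;v}_{t_1,t_2}[u(t_2,X^{t_1,x;v}_{t_2})]$. The idea is to (a) replace the random terminal value $u(t_2,X^{t_1,x;v}_{t_2})$ by the constant $u(t_2,x)$, and (b) show that the backward semigroup with this frozen terminal value displaces $u(t_2,x)$ only by a small amount over the short interval $[t_1,t_2]$. For (a) I would fix $v$, set $Y^{1;v}=G^{t_1,x;v}_{t_1,t_2}[u(t_2,X^{t_1,x;v}_{t_2})]$ and $Y^{2;v}=G^{t_1,x;v}_{t_1,t_2}[u(t_2,x)]$, and apply It\^o's formula to $e^{-kr}|Y^{1;v}_r-Y^{2;v}_r|^2$ exactly as in the proof of Theorem \ref{dp}. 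Since both BSDEs carry the same driver, the monotonicity (H5) absorbs the driver difference, and combining the Lipschitz bound of Lemma \ref{pz26}(i) with a short-time SDE increment estimate of the type underlying Proposition \ref{pz24}, namely $E^{\mathscr{F}_{t_1}}[\sup_{t_1\le s\le t_2}|X^{t_1,x;v}_s-x|^2]\le C(1+|x|^2)\delta$, I obtain
\begin{equation*}
|Y^{1;v}_{t_1}-Y^{2;v}_{t_1}|\le C_p\sqrt{(1+|x|^2)\,\delta}\qquad\text{a.s., uniformly in }v .
\end{equation*}

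For (b), with the deterministic terminal $\zeta:=u(t_2,x)$ the $\mathscr F_{t_1}$-conditional expectation of the stochastic integral vanishes, so
\begin{equation*}
Y^{2;v}_{t_1}-\zeta=E^{\mathscr F_{t_1}}\Big[\int_{t_1}^{t_2}f(r,X^{t_1,x;v}_r,Y^{2;v}_r,v_r)\,dr\Big].
\end{equation*}
Using (H4) and (H6) I bound the integrand by $|f(r,0,0,v_r)|+\lambda|X^{t_1,x;v}_r|+\kappa(1+|Y^{2;v}_r|^p)$. The first term is bounded on $[0,T]\times U$ because $f(\cdot,0,0,\cdot)$ is continuous (H3) and $U$ is compact; the second and third are controlled by the a priori estimates of Propositions \ref{pz24} and \ref{pz27} applied with the deterministic initial datum $x$ and with $2q\ge p$, which yield bounds of the form $C(1+|x|+|x|^p)$ uniform in $v$. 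Hence $|Y^{2;v}_{t_1}-\zeta|\le C_p(1+|x|+|x|^p)\,\delta$ uniformly in $v$. Combining (a) and (b) and passing to the essential supremum via $|\esssup_v Y^{1;v}_{t_1}-\esssup_v Y^{2;v}_{t_1}|\le\esssup_v|Y^{1;v}_{t_1}-Y^{2;v}_{t_1}|$, I conclude $|u(t_1,x)-u(t_2,x)|\le\omega(\delta)$ with $\omega(\delta)=C_p(1+|x|+|x|^p)(\sqrt\delta+\delta)\to0$, locally uniformly in $x$; since $u$ is deterministic (Lemma \ref{pz1}), this is a genuine modulus for the function $u$, which together with Lemma \ref{pz26}(i) gives joint continuity.

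The main obstacle is step (b): because the aggregator is only monotonic (H5) and of polynomial growth (H6) in $y$ rather than Lipschitz, one cannot dominate $\int_{t_1}^{t_2}|f|\,dr$ by a linear-in-$y$ bound and must instead invoke the higher-moment ($L^{2q}$ with $2q\ge p$) a priori estimates of Proposition \ref{pz27}. Keeping every constant uniform in $v\in\mathcal U$ — which relies on the compactness of $U$ and on the initial state $x$ being deterministic, so that the resulting modulus is deterministic and locally uniform in $x$ — is the delicate bookkeeping that the non-Lipschitz setting imposes on the otherwise standard short-horizon argument.
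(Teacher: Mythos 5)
Your argument is correct and is essentially the proof the paper has in mind: the paper omits the details, citing Lemma \ref{pz26} for the Lipschitz dependence on $x$ and Peng's Proposition 5.5 for the $\tfrac12$-H\"older dependence on $t$, and that latter argument is exactly your DPP-based short-horizon decomposition (freeze the terminal value via the spatial Lipschitz bound and the SDE increment estimate, then control the drift of the backward semigroup over $[t_1,t_2]$ using (H4), (H6) and the a priori estimates of Propositions \ref{pz24}--\ref{pz27}). Your write-up supplies precisely the adaptation the non-Lipschitz setting requires, namely replacing the linear-in-$y$ bound on the driver by the polynomial growth bound together with the higher-moment estimates, so it is a faithful instantiation of the proof the paper leaves out.
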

Note that the Lipschitz continuity of $u(t,x)$ with respect to $x$ is a result of Lemma \ref{pz26} which is the counterpart of Lemma 5.2 in \cite{Peng}. Also we can prove the ${1\over2}$-H\"{o}lder continuity of $u(t,x)$ with respect to $t$ in a similar way referring to Proposition 5.5 in \cite{Peng}, which together with Lemma \ref{pz26} implies the continuity of the value function with respect to $(t,x)$. There is nothing special for the non-Lipschitz aggregator in our setting in comparison with the classical Lipschitz aggregator, so we leave out the proof here. 

\vspace{2mm}
Then we define a sequence of smootherized functions $f_n$, $n\in\mathbb{N}$,  based on the aggregator $f$ as follows:
\begin{equation}\label{eq60}
f_n(t,x,y,v)\triangleq \ (\rho_n * f(t,x,\cdot,v))(y),
\end{equation}
where $\rho_n: \mathbb{R}^1\rightarrow \mathbb{R}^+ $, $n\in\mathbb{N}$, is a family of sufficiently smooth functions with the compact support in $[-\frac{1}{n}, \frac{1}{n}]$ and satisfies
\begin{equation*}\label{eq61}
\int_{\mathbb{R}^1}\rho_n(a)da=1.
\end{equation*}
Consequently, we have a sequence of BSDEs with the smootherized aggregators $f_n$, $n \in \mathbb{N}$, on the interval $[t,T]$:
\begin{equation}\label{eq66}
\begin{aligned}
Y^{t,x,n;v}_s= h(X_T^{t,x;v}) + \int_s^T f_n(r, X_r^{t,x;v},Y^{t,x,n;v}_r ,v_r )dr-\int_s^T Z^{t,x,n;v}_r dB_r.
\end{aligned}
\end{equation}
With the solutions of BSDEs (\ref{eq66}), we can define a sequence of stochastic recursive control problems 
whose cost functional for each $n\in\mathbb{N}$ is
\begin{eqnarray*}\label{eq67}
& J_n(t,x;v)\triangleq  Y_t^{t,x,n;v}\ \ \ \ {\rm for}\ v\in \mathcal{U},\ t\in[0,T],\ x\in\mathbb{R}^n
\end{eqnarray*}
and corresponding control problem is to find an optimal $\bar{v}\in\mathcal{U}$ to maximize above cost functional for given $(t,x)$.
Thus, for each $n\in\mathbb{N}$, the value function of control problem has a form as
\begin{eqnarray}\label{pz18}
& u_n(t,x)\triangleq \text{esssup}_{v\in \mathcal{U}}J_n(t,x;v)\ \ \ \ {\rm for}\ t\in[0,T],\ x\in\mathbb{R}^n
\end{eqnarray}
and the Hamiltonian appears like
 \begin{equation}\label{pz20}
   H_n(t,x,r,p,A)\triangleq\sup_{v \in U} \{\frac{1}{2}Tr(\sigma(t,x,v)\sigma^*(t,x,v)A)+ \langle p,b(t,x,v)\rangle+ f_n(t,x,r,v)\},
  \end{equation}
where $(t,x,r,p,A)\in [0,T]\times \mathbb{R}^n  \times \mathbb{R}^1 \times \mathbb{R}^n \times \mathbb{S}^n$.


Then we prove the uniform convergence of the smootherized aggregators in a compact subset of their domain utilizing the continuity of the aggregator. 
\begin{lemma}\label{lemma0}
Assume Conditions (H3)--(H4). Then $f_n$ defined in (\ref{eq60}) converges to $f$, uniformly in every compact subset of $[0,T]\times \mathbb{R}^n\times\mathbb{R}^1\times U$.
\end{lemma}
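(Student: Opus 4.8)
The plan is to first upgrade the separate regularity hypotheses on $f$ into genuine joint continuity, and then to exploit uniform continuity on compact sets together with the localizing effect of the mollifier. Recall that throughout Section 4 the aggregator does not depend on $z$, so (H3) tells us that $f(t,x,y,v)$ is continuous in $(t,y,v)$, while (H4) reduces to $|f(t,x,y,v)-f(t,x',y,v)|\le\lambda|x-x'|$, i.e. $f$ is Lipschitz in $x$ with a constant uniform in the remaining variables. I would first observe that these two facts force $f$ to be jointly continuous in $(t,x,y,v)$: for any sequence $(t_k,x_k,y_k,v_k)\to(t,x,y,v)$ one splits
\[
\big|f(t_k,x_k,y_k,v_k)-f(t,x,y,v)\big|\le \lambda|x_k-x|+\big|f(t_k,x,y_k,v_k)-f(t,x,y,v)\big|,
\]
where the first term vanishes by the Lipschitz bound and the second by continuity in $(t,y,v)$.

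Next, fix a compact set $K\subset[0,T]\times\mathbb{R}^n\times\mathbb{R}^1\times U$. I would enlarge it to the compact set $\widetilde K$ obtained by allowing the $y$-coordinate to range over a unit neighbourhood, so that every shifted point $(t,x,y-a,v)$ with $(t,x,y,v)\in K$ and $|a|\le1$ still lies in $\widetilde K$. By the joint continuity just established, $f$ is uniformly continuous on $\widetilde K$; in particular, given $\varepsilon>0$ there is $\delta\in(0,1)$ such that $|f(t,x,y-a,v)-f(t,x,y,v)|<\varepsilon$ whenever $(t,x,y,v)\in K$ and $|a|<\delta$.

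Then I would use the defining convolution together with $\int_{\mathbb{R}^1}\rho_n(a)\,da=1$ and $\rho_n\ge0$, $\mathrm{supp}\,\rho_n\subset[-\tfrac1n,\tfrac1n]$, to write
\[
f_n(t,x,y,v)-f(t,x,y,v)=\int_{-1/n}^{1/n}\rho_n(a)\big(f(t,x,y-a,v)-f(t,x,y,v)\big)\,da,
\]
whence
\[
\sup_{(t,x,y,v)\in K}\big|f_n(t,x,y,v)-f(t,x,y,v)\big|
\le\sup_{(t,x,y,v)\in K}\ \sup_{|a|\le 1/n}\big|f(t,x,y-a,v)-f(t,x,y,v)\big|.
\]
For $n>1/\delta$ the shift satisfies $|a|\le1/n<\delta$, so the uniform continuity bound makes the right-hand side at most $\varepsilon$. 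Letting $\varepsilon\to0$ then yields the claimed uniform convergence of $f_n$ to $f$ on $K$.

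All the steps are elementary; the only point requiring genuine care is the very first one, namely passing from the two partial regularity assumptions (H3)--(H4) to full joint continuity, since the mollification and compactness arguments proceed routinely thereafter. One should also remember to enlarge the compact set in the $y$-direction before invoking uniform continuity, because the mollifier samples $f$ at the shifted arguments $y-a$, which may otherwise leave $K$.
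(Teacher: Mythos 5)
Your proposal is correct and follows essentially the same route as the paper's own proof: both deduce joint (hence uniform) continuity of $f$ on an enlarged compact set from (H3) together with the Lipschitz bound in $x$ from (H4), and then bound $|f_n-f|$ via the convolution identity and $\int\rho_n=1$. Your write-up is merely a bit more explicit about the triangle-inequality step that upgrades the partial regularity to joint continuity, which the paper states without detail.
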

\begin{proof}
Since $\int_{\mathbb{R}^1} \rho_n(a)da=1$,
we have
 \begin{equation*}\label{eq78}
 f_n(t,x,y,v)-f(t,x,y,v)=\int_{\mathbb{R}^1}(f(t,x,y-a,v)-f(t,x,y,v))\rho_n(a)da.
 \end{equation*}
For any given compact set $K\subset[0,T]\times \mathbb{R}^n\times\mathbb{R}^1\times U$, there exists another compact set $\hat{K}$ such that $(t,x,y-a,v)\in \hat{K}$ for any $(t,x,y,v)\in K$ and $a\in[-1, 1]$. Notice that since $f(t,x,y,v)$ is continuous with respect to $(t,y,v)$ and Lipschitz continuous with respect to $x$, we know the continuity and further the uniform continuity of $f(t,x,y,v)$ with respect to $(t,x,y,v)$ in the compact set $\hat{K}$. So, for any $\varepsilon >0$, as $n$ is sufficiently large we have
 \begin{equation*}\label{eq79}
\begin{aligned}
& \sup_{(t,x,y,v) \in K} |f_n(t,x,y,v)-f(t,x,y,v)|\\
& \leq \sup_{(t,x,y,v)\in K}\int_{|a| \leq \frac{1}{n}}|f(t,x,y-a,v)-f(t,x,y,v)|\rho_n(a)da \\
& \leq \varepsilon \int_{|a| \leq \frac{1}{n}} \rho_n(a)da\\
&=\varepsilon,
\end{aligned}
\end{equation*}
which implies the desired conclusion.
\end{proof}

As a result, we can further get the uniform convergence of the solutions of BSDEs with smootherized aggregators in $L^2(\Omega)$ space.
\begin{lemma}\label{lemma1}
Assume 
Conditions (H1)--(H6). Then for any $v\in \mathcal{U}$,
$$\lim_{n\to\infty}\sup_{(t,x)\in K}E[|{Y}^{t,x,n;v}_t -Y_t^{t,x;v}|^2]=0,$$
where $K$ is an arbitrary compact set in $[0,T]\times \mathbb{R}^n$, $Y^{t,x;v}_\cdot$ and $Y^{t,x,n;v}_\cdot$ are the solutions of BSDEs \eqref{eq5a} and \eqref{eq66}, respectively.
\end{lemma}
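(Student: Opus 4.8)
The plan is to estimate the difference $\hat Y_s := Y_s^{t,x,n;v} - Y_s^{t,x;v}$ directly. Since the two BSDEs \eqref{eq66} and \eqref{eq5a} share the same terminal value $h(X_T^{t,x;v})$, the difference satisfies a BSDE with zero terminal condition and driver $g_r := f_n(r,X_r^{t,x;v},Y_r^{t,x,n;v},v_r) - f(r,X_r^{t,x;v},Y_r^{t,x;v},v_r)$. Applying It\^o's formula to $|\hat Y_s|^2$ on $[t,T]$ and writing $g_r = [f_n(r,X_r,Y^n_r,v_r) - f(r,X_r,Y^n_r,v_r)] + [f(r,X_r,Y^n_r,v_r) - f(r,X_r,Y_r,v_r)]$, I would bound the second bracket by the monotonicity assumption (H5), namely $\hat Y_r[f(r,X_r,Y^n_r,v_r)-f(r,X_r,Y_r,v_r)] \le \mu|\hat Y_r|^2$, and the first bracket by Young's inequality. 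Taking expectations (the martingale part vanishes by the $S^2\times M^2$ integrability from Proposition \ref{pz27}) and invoking Gronwall's inequality, this reduces the claim to showing
\[\lim_{n\to\infty}\sup_{(t,x)\in K}E\Big[\int_t^T |f_n(r,X_r^{t,x;v},Y_r^{t,x,n;v},v_r) - f(r,X_r^{t,x;v},Y_r^{t,x,n;v},v_r)|^2\,dr\Big]=0.\]

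Before the splitting argument I would record uniform moment bounds. The smootherized drivers $f_n$ inherit (H4)--(H6) with constants independent of $n$: for (H5) this follows because the shift $y\mapsto y-a$ preserves the monotonicity inequality before integrating against $\rho_n$, and for (H6), since $\mathrm{supp}\,\rho_n\subset[-1/n,1/n]\subset[-1,1]$, the polynomial growth constant only picks up a fixed additive term. Consequently Propositions \ref{pz24} and \ref{pz27}, together with the continuity of $f(\cdot,0,0,\cdot)$ on the compact set $[0,T]\times U$ and the boundedness of $\{x : (t,x)\in K\}$, yield a finite constant $M_q$ with $\sup_{(t,x)\in K}\sup_n E[\sup_{s\in[t,T]}(|X^{t,x;v}_s|^{2q}+|Y^{t,x,n;v}_s|^{2q})]\le M_q$ for every $q\ge1$.

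For the convergence itself I would fix $R>0$ and decompose the time--space domain into the good region $G_R=\{|X_r^{t,x;v}|\le R,\ |Y_r^{t,x,n;v}|\le R\}$ and its complement. On $G_R$ the argument $(r,X_r,Y^n_r,v_r)$ lies in the compact set $[0,T]\times\bar B_R\times[-R,R]\times U$, so Lemma \ref{lemma0} bounds the integrand uniformly in $(t,x)$ by $T\sup_{[0,T]\times\bar B_R\times[-R,R]\times U}|f_n-f|^2$, which tends to $0$ as $n\to\infty$ for each fixed $R$. On the complement I would use the pointwise growth estimate $|f_n(r,X_r,Y^n_r,v_r)-f(r,X_r,Y^n_r,v_r)|\le C_p(1+|Y^{t,x,n;v}_r|^p)$, a consequence of (H6) applied after shifting $y$, then Cauchy--Schwarz to split off a factor $(E\int_t^T(1+|Y^n_r|^p)^4\,dr)^{1/2}$ --- finite and uniform by the $M_q$ bound with $q=2p$ --- times $(E\int_t^T\mathbf 1_{G_R^c}\,dr)^{1/2}$; Chebyshev's inequality and the same moment bound give $E\int_t^T\mathbf 1_{G_R^c}\,dr\le C_pR^{-2q}$ uniformly in $(t,x)\in K$ and $n$. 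Letting first $n\to\infty$ and then $R\to\infty$ closes the estimate.

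The main obstacle will be the \emph{uniformity in both $(t,x)\in K$ and $n$}: the essential point is that $f_n-f$ does not converge uniformly on all of $\mathbb{R}^1$, only on compacts, so the argument genuinely needs the uniform-in-$n$ moment estimates to make the tail contribution (where $|X_r|$ or $|Y^n_r|$ is large) small independently of $n$ before the compact-set convergence of Lemma \ref{lemma0} can be applied.
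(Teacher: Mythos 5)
Your proposal is correct and follows essentially the same route as the paper: It\^{o}'s formula plus the monotonicity condition reduces the problem to the convergence of $E\int_t^T|f_n-f|^2\,dr$, which is then handled by splitting into a compact region (where Lemma \ref{lemma0} applies) and a tail controlled uniformly via polynomial growth, Cauchy--Schwarz, Chebyshev and the moment estimates of Propositions \ref{pz24} and \ref{pz27}. The only cosmetic difference is that you evaluate the driver difference at $Y^{t,x,n;v}$ (hence need, and correctly supply, moment bounds uniform in $n$), whereas the paper evaluates it at $Y^{t,x;v}$ by exploiting the monotonicity of $f_n$ instead of $f$.
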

\begin{proof} 
Firstly, it is obvious that the smootherized aggregator $f_n$ satisfies Conditions (H3)--(H6). Hence, applying It\^{o}'s formula to $|{Y}^{t,x,n;v}_s -Y_s^{t,x;v}|^2$, we have for any $(t,x)\in K$,
\begin{equation*}
\begin{aligned}
&E[|{Y}^{t,x,n;v}_t -Y_t^{t,x;v}|^2]\\
\leq &C_pE\Big[\int_t^T|f_n(s,X_s^{t,x;v},Y_s^{t,x;v},v_s)-f(s,X_s^{t,x;v},Y_s^{t,x;v},v_s)|^2 ds\Big]\\
= &C_pE [\int_t^T|f_n-f|^21_{\{\{\sup\limits_{s\in[t,T]}|X_s^{t,x;v}| \geq N\} \cup \{\sup\limits_{s\in[t,T]}|Y_s^{t,x;v}| \geq N \}\}}ds] \\
& +C_pE[\int_t^T|f_n-f|^21_{\{\{\sup\limits_{s\in[t,T]}|X_s^{t,x;v}| < N\}\cap \{\sup\limits_{s\in[t,T]}|Y_s^{t,x;v}| < N\}\}}ds]\\
\leq & C_pE[\int_t^T|f_n-f|^21_{\{\sup\limits_{s\in[t,T]}|X_s^{t,x;v}| \geq N\}}ds]+ C_pE[\int_t^T|f_n-f|^21_{\{\sup\limits_{s\in[t,T]}|Y_s^{t,x;v}| \geq N\}}ds] \\
 &+ C_pE [\int_t^T|f_n-f|^21_{\{\{\sup\limits_{s\in[t,T]}|X_s^{t,x;v}| < N\}  \cap \{\sup\limits_{s\in[t,T]}|Y_s^{t,x;v}| < N\}\}}ds].
\end{aligned}
\end{equation*}
Then we define
\begin{eqnarray*}
&&J_1\triangleq E[\int_t^T|f_n(s,X_s^{t,x;v},Y_s^{t,x;v},v_s)-f(s,X_s^{t,x;v},Y_s^{t,x;v},v_s)|^21_{\{\sup\limits_{s\in[t,T]}|X_s^{t,x;v}| \geq N\}}ds],\\
&&J_2\triangleq E[\int_t^T|f_n(s,X_s^{t,x;v},Y_s^{t,x;v},v_s)-f(s,X_s^{t,x;v},Y_s^{t,x;v},v_s)|^21_{\{\sup\limits_{s\in[t,T]}|Y_s^{t,x;v}| \geq N\}}ds],\\
&&J_3\triangleq E [\int_t^T|f_n(s,X_s^{t,x;v},Y_s^{t,x;v},v_s)-f(s,X_s^{t,x;v},Y_s^{t,x;v},v_s)|^2\\
&&\ \ \ \ \ \ \ \ \ \ \ \ \ \ \ \times1_{\{\{\sup\limits_{s\in[t,T]}|X_s^{t,x;v}| < N\}  \cap \{\sup\limits_{s\in[t,T]}|Y_s^{t,x;v}| < N\}\}}ds],
\end{eqnarray*}
and deal with $J_1$, $J_2$ and $J_3$ in turn.

For $J_1$, it turns out that
\begin{eqnarray*}\label{pz38}
&&\sup_{(t,x)\in K}J_1\nonumber\\
&\leq&\sup_{(t,x)\in K}2E [\int _t^T\big(|f_n(s,X_s^{t,x;v},Y_s^{t,x;v},v_s)|^2+|f(s,X_s^{t,x;v},Y_s^{t,x;v},v_s)|^2\big)1_{\{\sup\limits_{s\in[t,T]}|X_s^{t,x;v}| \geq N\}}ds]\nonumber\\
&\leq &  \sup_{(t,x)\in K}C_pE [\int_t^T(1+|X_s^{t,x;v}|^2+ |Y_s^{t,x;v}|^{2p})1_{\{\sup\limits_{s\in[t,T]}|X_s^{t,x;v}| \geq N\}}ds]\nonumber\\
&\leq &  \sup_{(t,x)\in K}C_p\big(E [\int_t^T(1+|X_s^{t,x;v}|^4+ |Y_s^{t,x;v}|^{4p})ds]\big)^{1\over2}\big(\sup_{(t,x)\in K}P[\sup\limits_{s\in[t,T]}|X_s^{t,x;v}| \geq N]\big)^{1\over2}.
\end{eqnarray*}
To estimate above, we use Chebychev's inequality and Proposition \ref{pz24} to obtain for any $N>0$,
 \begin{equation*}
 P[\sup_{s\in[t,T]}|X_s^{t,x;v}| \geq N ] \leq  \frac{1}{N^2}E[\sup_{s\in[t,T]} |X_s^{t,x;v}|^2]\leq \frac{C_p}{N^2}|x|^2.
 \end{equation*}
Thus for any $\delta>0$, it follows from the boundedness of $x$ in $K$ that with a sufficiently large $N$,
 \begin{equation}\label{pz4}
 \sup_{(t,x)\in K}P[\sup_{s\in[t,T]}|X_s^{t,x;v}| \geq N ] \leq \delta.
\end{equation}
Moreover, by Propositions \ref{pz24} and \ref{pz27}, we know
\begin{eqnarray*}
\sup_{(t,x)\in K}\big(E [\int_t^T(1+|X_s^{t,x;v}|^4+ |Y_s^{t,x;v}|^{4p})ds]\big)^{1\over2}<\infty,
\end{eqnarray*}
which together with (\ref{pz4}) implies that for any given $\varepsilon>0$, there exists a sufficiently large ${N}_1$ such that as $N\geq {N}_1$, for all $n\in\mathbb{N}$,
$$
J_1\leq \varepsilon,\ \ \text{uniformly in the compact set $K$.}
$$

Then we turn to $J_2$, and by Proposition \ref{pz27} 
we have
\begin{equation*}
\begin{aligned}
&  E[\sup_{s\in[t,T]}|Y_s^{t,x;v}|^{2p}]
 \leq C_p(1+|x|^{2p}).
\end{aligned}
\end{equation*}
Again with a sufficiently large $N$, the application of Chebychev's inequality in $K$ leads to for any $\delta>0$,
\begin{equation}\label{pz5}
\sup_{(t,x)\in K}P[\sup_{s\in[t,T]}|Y_s^{t,x;v}| \geq N] \leq \delta.
\end{equation}
Similar to the treatment of $J_1$, by (\ref{pz5}) we can find a sufficiently large ${N}_2$ such that as $N\geq {N}_2$, for all $n\in\mathbb{N}$,
$$
J_2\leq \varepsilon,\ \ \text{uniformly in the compact set $K$.}
$$
We take $\hat{N}=N_1\vee N_2$ and use $\hat{N}$ to prove the uniform convergence of $J_3$.

To this end, notice for any $v\in \mathcal{U}$,
\begin{eqnarray*}
\sup_{(t,x)\in K}J_3&\leq&E [\int_t^T\sup_{(t,x)\in K}|f_n-f|^21_{\{\{\sup\limits_{s\in[t,T]}|X_s^{t,x;v}| < \hat{N}\}  \cap \{\sup\limits_{s\in[t,T]}|Y_s^{t,x;v}| < \hat{N}\}\}}ds]\\
&\leq&\int_t^T\sup_{(t,x,y)\in [0,T]\times\bar{B}^n_{\hat{N}}\times[-\hat{N},\hat{N}]}|f_n(t,x,y,v)-f(t,x,y,v)|^2ds,
\end{eqnarray*}
where $\bar{B}^n_{\hat{N}}$ is the closed ball with the radium $\hat{N}$ in $\mathbb{R}^n$.
By the dominated convergence theorem and Lemma \ref{lemma0}, as $n$ is sufficiently large, we have
$$
J_3\leq \varepsilon,\ \ \text{uniformly in the compact set $K$.}
$$

Therefore, due to the arbitrariness of $\varepsilon$, the claim that
$
\lim\limits_{n\to\infty}\sup\limits_{(t,x)\in K}E[|{Y}^{t,x,n;v}_t -Y_t^{t,x;v}|^2] = 0 
$
follows, which puts an end of proof.
\end{proof}

The uniform convergence in compact subset of domain holds for the value function as well, which is displayed in next lemma.
 \begin{lemma}\label{lemma2}
Assume 
Conditions (H1)--(H6). Then $u_n$ converges to $u$, uniformly in every compact subset of $[0,T]\times \mathbb{R}^n$.
 \end{lemma}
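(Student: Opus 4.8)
The plan is to reduce the uniform convergence of the value functions to the uniform convergence of the associated BSDE solutions already obtained in Lemma \ref{lemma1}, the only genuinely new ingredient being that this convergence must be made uniform in the control $v$ as well. First I would exploit that both $u$ and $u_n$ are deterministic: by Lemma \ref{pz1} (which applies equally to $f_n$, since the smootherized aggregator still satisfies (H1)--(H6)) one has $u(t,x)=\sup_{v\in\mathcal{U}^t}Y_t^{t,x;v}$ and $u_n(t,x)=\sup_{v\in\mathcal{U}^t}Y_t^{t,x,n;v}$, and for every $v\in\mathcal{U}^t$ the quantities $Y_t^{t,x;v}$ and $Y_t^{t,x,n;v}$ are $\mathscr{F}_{t,t}$-measurable, hence deterministic constants. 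The elementary inequality for suprema then gives
\begin{equation*}
|u_n(t,x)-u(t,x)|\le \sup_{v\in\mathcal{U}^t}|Y_t^{t,x,n;v}-Y_t^{t,x;v}|,
\end{equation*}
and since each term on the right is deterministic, $|Y_t^{t,x,n;v}-Y_t^{t,x;v}|^2=E[|Y_t^{t,x,n;v}-Y_t^{t,x;v}|^2]$, whence
\begin{equation*}
\sup_{(t,x)\in K}|u_n(t,x)-u(t,x)|^2\le \sup_{(t,x)\in K}\sup_{v\in\mathcal{U}^t}E[|Y_t^{t,x,n;v}-Y_t^{t,x;v}|^2].
\end{equation*}

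It therefore suffices to prove the uniform-in-$v$ refinement of Lemma \ref{lemma1}, namely that the right-hand side above tends to $0$ as $n\to\infty$. I would do this by revisiting the decomposition $J_1,J_2,J_3$ in the proof of Lemma \ref{lemma1} and checking that each bound is in fact uniform over $v\in\mathcal{U}$. For $J_1$ and $J_2$ this is immediate: the a priori moment bounds from Propositions \ref{pz24} and \ref{pz27} depend on $v$ only through $\int_t^T|v_r|^2dr$ and $\int_t^T|f(r,0,0,0,v_r)|^2dr$, both of which are bounded uniformly over $\mathcal{U}$ because the controls take values in the compact set $U$ and $f(\cdot,0,0,0,\cdot)$ is continuous, hence bounded, on the compact set $[0,T]\times U$; likewise the Chebychev estimates \eqref{pz4} and \eqref{pz5} carry constants depending only on $L,\lambda,\mu,\kappa,T$ and on the (bounded) $|x|$ in $K$.

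The main obstacle is the term $J_3$, where one must control $|f_n-f|$ on the event that $X$ and $Y$ stay bounded. Here I would enlarge the compact set on which uniform convergence of $f_n$ is invoked to include the control variable: since $v_s\in U$ for all $s$,
\begin{equation*}
\sup_{(t,x)\in K}J_3\le T\sup_{(t,x,y,w)\in[0,T]\times\bar{B}^n_{\hat N}\times[-\hat N,\hat N]\times U}|f_n(t,x,y,w)-f(t,x,y,w)|^2,
\end{equation*}
and this bound no longer depends on $v$. Since $[0,T]\times\bar{B}^n_{\hat N}\times[-\hat N,\hat N]\times U$ is compact, Lemma \ref{lemma0} forces the right-hand side to $0$ as $n\to\infty$. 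Combining the three uniform estimates yields $\sup_{v\in\mathcal{U}}\sup_{(t,x)\in K}E[|Y_t^{t,x,n;v}-Y_t^{t,x;v}|^2]\to 0$, which by the reduction above completes the proof.
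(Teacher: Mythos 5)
Your proof is correct, but it takes a genuinely different route from the paper's. The paper fixes $\varepsilon>0$ and, for each $(t,x)$, picks $\varepsilon$-optimal controls $v_1$ (for $u$) and $v_2$ (for $u_n$), sandwiches $u(t,x)-u_n(t,x)$ between $Y_t^{t,x;v_1}-Y_t^{t,x,n;v_1}+\varepsilon$ and $Y_t^{t,x;v_2}-Y_t^{t,x,n;v_2}-\varepsilon$, and then cites Lemma \ref{lemma1} to send $n\to\infty$. You instead bypass the $\varepsilon$-optimal controls entirely: you use the elementary inequality $|\sup_v a_v-\sup_v b_v|\le\sup_v|a_v-b_v|$ together with the determinacy of $Y_t^{t,x;v}$ for $v\in\mathcal{U}^t$, and then prove the uniform-in-$v$ strengthening of Lemma \ref{lemma1} by rechecking that the $J_1,J_2,J_3$ bounds depend on $v$ only through quantities controlled by the compactness of $U$ (and the continuity, hence boundedness, of $f(\cdot,0,0,0,\cdot)$ on $[0,T]\times U$), enlarging the compact set in Lemma \ref{lemma0} to include the control variable for $J_3$. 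What your version buys is precision on a point the paper glosses over: the controls $v_1,v_2$ in the paper's argument depend on $(t,x)$, whereas Lemma \ref{lemma1} as stated holds for a \emph{fixed} $v\in\mathcal{U}$, so taking $\sup_{(t,x)\in K}$ afterwards implicitly requires exactly the uniformity in $v$ that you establish explicitly. In that sense your argument makes rigorous the step the paper asserts; the paper's version is shorter and reads more naturally as a value-function argument, but both ultimately rest on the same observation that the estimates behind Lemma \ref{lemma1} are uniform over the admissible control set.
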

\begin{proof}
Given arbitrary $\varepsilon > 0$, for any $t\in[0,T]$ and $x \in \mathbb{R}^n$,  we can find ${v_1}\in \mathcal{U}$ such that
 \begin{equation*}\label{eq71}
 u(t,x) < Y_t^{t,x;v_1} + \varepsilon.
 \end{equation*}
So
 \begin{equation}\label{eq72}
  u(t,x)-u_n(t,x) = u(t,x)-\sup_{v\in \mathcal{U}} Y_t^{t,x,n;v}\leq Y_t^{t,x;v_1} + \varepsilon - {Y}^{t,x,n;v_1}_t.
 \end{equation}
On the other hand, for above $\varepsilon$, there exists ${v_2}\in \mathcal{U}$ such that
 \begin{equation*}\label{eq73}
 u_n(t,x) \leq {Y}^{t,x,n;v_2}_t+\varepsilon,
 \end{equation*}
which implies
 \begin{equation}\label{eq74}
 u(t,x)-u_n(t,x) \geq u(t,x) - {Y}^{t,x,n;v_2}_t - \varepsilon \geq Y^{t,x;v_2}_t -{Y}^{t,x,n;v_2}_t -\varepsilon.
 \end{equation}
Since \eqref{eq72} and \eqref{eq74}, we have
\begin{equation*}
|u(t,x)-u_n(t,x)|\leq E[|Y_t^{t,x;v_1}- {Y}^{t,x,n;v_1}_t|]+E[|Y^{t,x;v_2}_t -{Y}^{t,x,n;v_2}_t|]+ 2\varepsilon.
 \end{equation*}

Noticing $v_1$ and $v_2$ are given admissible controls, by Lemma \ref{lemma1} we know that for any compact set $K\subset[0,T]\times \mathbb{R}^n$,  $$\lim_{n\to\infty}\sup_{(t,x)\in K}E[|Y_t^{t,x;v_1}- {Y}^{t,x,n;v_1}_t|]+E[|Y^{t,x;v_2}_t -{Y}^{t,x,n;v_2}_t|]=0.$$
Due to the arbitrariness of $\varepsilon$, we obtain the uniform convergence of the value functions $u_n$ to $u$ in $K$. 
\end{proof}

We have known that $f_n$ converges uniformly to $f$ in the compact subset of their domain, so, by definition of the Hamiltonian, it comes without a surprise that the same kind of convergence of the Hamiltonian $H_n$ to $H$ holds as well.
\begin{lemma}\label{lemma3}
Assume 
Conditions (H1)--(H6). Then $H_n$ converges to $H$, uniformly in every compact subset of their domain.
\end{lemma}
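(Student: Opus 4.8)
The plan is to reduce the convergence of the Hamiltonians entirely to the already-established uniform convergence of the smootherized aggregators $f_n\to f$ from Lemma \ref{lemma0}. The key observation is that $H$ and $H_n$ differ only through the aggregator term: the quantities $\frac{1}{2}\mathrm{Tr}(\sigma(t,x,v)\sigma^*(t,x,v)A)$ and $\langle p,b(t,x,v)\rangle$ appearing inside the supremum over $v\in U$ are identical in both $H$ and $H_n$. Hence, invoking the elementary inequality $|\sup_{v}g(v)-\sup_{v}h(v)|\le\sup_{v}|g(v)-h(v)|$ (valid once both suprema are finite, which holds here since $b,\sigma,f$ are continuous in $v$ by (H1)--(H3) and $U$ is compact), I would first establish the pointwise bound
$$|H_n(t,x,r,p,A)-H(t,x,r,p,A)|\le\sup_{v\in U}|f_n(t,x,r,v)-f(t,x,r,v)|$$
for every $(t,x,r,p,A)$ in the domain $[0,T]\times\mathbb{R}^n\times\mathbb{R}^1\times\mathbb{R}^n\times\mathbb{S}^n$.

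Next I would pass to a compact subset. Fix an arbitrary compact set $\mathcal{K}\subset[0,T]\times\mathbb{R}^n\times\mathbb{R}^1\times\mathbb{R}^n\times\mathbb{S}^n$, and let $K'$ denote its projection onto the $(t,x,r)$-coordinates, which is again compact. Since $U\subset\mathbb{R}^m$ is compact by the definition of the admissible control set $\mathcal{U}$, the product $K'\times U$ is a compact subset of $[0,T]\times\mathbb{R}^n\times\mathbb{R}^1\times U$, i.e. precisely of the domain over which Lemma \ref{lemma0} guarantees uniform convergence of $f_n$. Taking the supremum of the displayed bound over $\mathcal{K}$ and using that its right-hand side depends only on $(t,x,r)$, I obtain
$$\sup_{(t,x,r,p,A)\in\mathcal{K}}|H_n-H|\le\sup_{(t,x,r,v)\in K'\times U}|f_n(t,x,r,v)-f(t,x,r,v)|,$$
and the right-hand side tends to $0$ as $n\to\infty$ by Lemma \ref{lemma0}. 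This yields the claim.

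Since the argument is a direct transfer of the uniform convergence through the supremum, no genuine obstacle arises; the only points requiring care are the cancellation of the drift and diffusion terms (so that the estimate reduces purely to $f_n-f$ and the unbounded variables $p,A$ drop out of the comparison) and the verification that the $(t,x,r)$-projection of a compact set, paired with the compact control set $U$, lands inside the domain to which Lemma \ref{lemma0} applies.
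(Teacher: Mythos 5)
Your proof is correct and follows essentially the same route as the paper's: both establish the pointwise bound $|H_n-H|\le\sup_{v\in U}|f_n(t,x,r,v)-f(t,x,r,v)|$ by cancelling the common drift and diffusion terms inside the supremum, and then pass a compact set in the $(t,x,r,p,A)$-variables to a compact set in $(t,x,r,v)$ on which Lemma \ref{lemma0} applies. No discrepancies worth noting.
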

\begin{proof}
To see this, for any $t\in[0,T]$, $x\in\mathbb{R}^n$, $r\in\mathbb{R}^1$, $p\in\mathbb{R}^n$, $A\in\mathbb{S}^n$, $v\in U$, we set $$\mathcal{A}=\frac{1}{2}Tr(\sigma(t,x,v)\sigma^*(t,x,v)A)+ \langle p,b(t,x,v)\rangle+ f_n(t,x,r,v)$$ and
$$\mathcal{B}=\frac{1}{2}Tr(\sigma(t,x,v)\sigma^*(t,x,v)A)+ \langle p,b(t,x,v)\rangle+ f(t,x,r,v).$$
Noticing
\begin{equation*}
H_n-H=\sup_{v \in U} \mathcal{A}-\sup_{v \in U} \mathcal{B} \leq \sup_{v \in U} (\mathcal{A}-\mathcal{B})=\sup_{v \in U} (f_n-f) \leq  \sup_{v \in U} |f_n-f|
\end{equation*}
and
\begin{equation*}
H-H_n =\sup_{v \in U} \mathcal{B}-\sup_{v \in U} \mathcal{A} \leq \sup_{v \in U} (\mathcal{B}-\mathcal{A})=\sup_{v \in U} (f-f_n) \leq  \sup_{v \in U} |f-f_n|,
\end{equation*}
we have
\begin{equation*}
|H_n-H| \leq  \sup_{v \in U} |f_n(t,x,r,v)-f(t,x,r,v)|.
\end{equation*}

Note that for any compact set $K\subset[0,T] \times \mathbb{R}^n \times \mathbb{R}^1\times \mathbb{R}^n\times\mathbb{S}^n$ and $(t,x,r,p,A,v)\in K\times U$, $(t,x,r,v)\in\hat{K}$, where $\hat{K}$ is a compact set in  $[0,T]\times \mathbb{R}^n\times\mathbb{R}^1\times U$.
Hence by Lemma \ref{lemma0}, we have
\begin{eqnarray*}
&&\lim_{n\to\infty}\sup_{(t,x,r,p,A) \in K} |H_n(t,x,r,p,A)-H(t,x,r,p,A)|\\
&\leq&\lim_{n\to\infty}\sup_{(t,x,r,p,A) \in K}\sup_{v \in U}|f_n(t,x,r,v)-f(t,x,r,v)|\\
 &\leq&\lim_{n\to\infty}\sup_{(t,x,y,v) \in \hat{K}} |f_n(t,x,y,v)-f(t,x,y,v)|=0.
\end{eqnarray*}
Therefore, the uniform convergence of $H_n$ to $H$ in $K$ follows from above.
\end{proof}

To end the preliminaries, we introduce the stability property of viscosity solutions below (see e.g. Lemma 6.2 in Fleming and Soner \cite{Fleming} for details of proof) which provides a method based on the uniform convergence of Hamiltonians to get the connection between the value function and the solution of HJB equation.
\begin{prop}{\bf {(Stability)}}\label{prop1}
Let $u_n$ be a viscosity subsolution (resp. supersolution) to the following PDE
\begin{equation*}
\frac{\partial}{\partial t} u_n(t,x) + H_n(t,x,u_n(t,x), D_{x}u_n(t,x), D^2_{x}u_n(t,x) )=0,\quad (t,x)\in [0,T)\times \mathbb{R}^n,
\end{equation*}
where $H_n(t,x,r,p,A):[0,T]\times \mathbb{R}^n  \times \mathbb{R}^1 \times \mathbb{R}^n \times \mathbb{S}^n \rightarrow \mathbb{R}^1$ is continuous
and satisfies the ellipticity condition
\begin{equation}\label{pz37}
H_n(t,x,r,p,X) \leq H_n(t,x,r,p,Y) \ \ \ \ \text{whenever }~X \leq Y.
\end{equation}
Assume that $H_n$ and $u_n$ converge to $H$ and $u$, respectively, uniformly in every compact subset of their own domains. Then $u$ is a viscosity subsolution (resp. supersolution)  of the limit equation
\begin{equation*}
\frac{\partial}{\partial t} u(t,x)+H(t,x,u(t,x), D_{x}u(t,x), D^2_{x}u(t,x) )=0.
\end{equation*}
\end{prop}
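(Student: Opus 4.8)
The plan is to verify that the locally uniform limit $u$ obeys the defining test-function inequality of a viscosity subsolution, the supersolution case being entirely symmetric. Fix $\varphi\in C^{1,2}([0,T]\times\mathbb{R}^n;\mathbb{R}^1)$ and $(t_0,x_0)\in[0,T)\times\mathbb{R}^n$ such that $\varphi-u$ attains a global minimum at $(t_0,x_0)$; the goal is to show
$$\frac{\partial}{\partial t}\varphi(t_0,x_0)+H\big(t_0,x_0,\varphi(t_0,x_0),D_x\varphi(t_0,x_0),D^2_x\varphi(t_0,x_0)\big)\geq 0.$$
By the standard reduction for viscosity solutions I may assume this minimum is both strict and local on some closed ball $\bar{B}=\bar{B}_r(t_0,x_0)$: replacing $\varphi$ by $\varphi+|(t,x)-(t_0,x_0)|^4$ renders the minimum strict without changing the value, gradient or Hessian of $\varphi$ at $(t_0,x_0)$, and local contact points suffice in the definition since a test function can always be modified away from the contact point.

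The first key step is a perturbation-of-minima argument. Let $(t_n,x_n)$ minimize $\varphi-u_n$ over the compact set $\bar{B}$. Since $u_n\to u$ uniformly on $\bar{B}$ and $(t_0,x_0)$ is the unique, strict minimizer of $\varphi-u$ there, a routine compactness argument forces $(t_n,x_n)\to(t_0,x_0)$; in particular $(t_n,x_n)$ is a local minimizer of $\varphi-u_n$ for all large $n$, which serves as a legitimate contact point for the subsolution inequality. Because each $u_n$ is a viscosity subsolution of $\partial_t u_n+H_n=0$, applying its defining inequality at $(t_n,x_n)$ gives
$$\frac{\partial}{\partial t}\varphi(t_n,x_n)+H_n\big(t_n,x_n,\varphi(t_n,x_n),D_x\varphi(t_n,x_n),D^2_x\varphi(t_n,x_n)\big)\geq 0.$$

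It then remains to pass to the limit. Writing $\xi_n=(t_n,x_n,\varphi(t_n,x_n),D_x\varphi(t_n,x_n),D^2_x\varphi(t_n,x_n))$ and $\xi_0$ for the corresponding tuple at $(t_0,x_0)$, the regularity $\varphi\in C^{1,2}$ together with $(t_n,x_n)\to(t_0,x_0)$ gives $\xi_n\to\xi_0$, so all these arguments stay in a fixed compact set $K$. Splitting $H_n(\xi_n)=\big(H_n(\xi_n)-H(\xi_n)\big)+H(\xi_n)$, the bracket tends to $0$ by the uniform convergence of $H_n$ to $H$ on $K$, while $H(\xi_n)\to H(\xi_0)$ by continuity of $H$; likewise $\partial_t\varphi(t_n,x_n)\to\partial_t\varphi(t_0,x_0)$. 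Letting $n\to\infty$ yields the desired inequality at $(t_0,x_0)$, and since $u(T,x)=\lim_n u_n(T,x)\leq h(x)$ the terminal requirement passes to the limit as well, so $u$ is a viscosity subsolution of the limit equation.

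I expect the perturbation-of-minima step to be the main obstacle: one has to guarantee that the minimizers of $\varphi-u_n$ genuinely converge to the strict minimizer of $\varphi-u$ and become admissible contact points, which is precisely where the strictness of the extremum and the local uniform convergence are used in tandem. The remaining work is continuity bookkeeping. Finally, the ellipticity hypothesis \eqref{pz37} together with continuity is inherited by the limit, since $H$ is a locally uniform limit of the continuous, degenerate-elliptic $H_n$, so the limit equation is again a bona fide viscosity PDE of the same type; this makes the conclusion meaningful, although it is not itself needed in the passage to the limit.
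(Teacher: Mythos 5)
Your argument is correct, and it is precisely the standard stability proof (strictification of the extremum, perturbation of minima via local uniform convergence, then passage to the limit using continuity of $H$ and uniform convergence of $H_n$ on the compact set containing the contact data) that the paper itself does not write out but delegates to Lemma 6.2 of Fleming and Soner \cite{Fleming}. There is nothing to add beyond noting that the ellipticity hypothesis \eqref{pz37} is indeed not used in the limit passage, exactly as you observe.
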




Now we are well prepared to prove the main theorem in this section.
\begin{theorem}\label{th2}
  Assume 
Conditions (H1)--(H6). Then $u$ defined in \eqref{eq10} is a viscosity solution of HJB equation \eqref{eq31}.
  \end{theorem}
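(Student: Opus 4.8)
The plan is to obtain the viscosity solution property not for $u$ directly but for the regularized value functions $u_n$, and then to transfer it to $u$ via the stability result. The reason for this detour is that the aggregator $f$ is only continuous (and monotone) in $y$, so the classical argument linking the dynamic programming principle to the HJB equation is not directly available for $f$ itself; after mollification, however, each $f_n$ is smooth, hence locally Lipschitz in $y$, while still obeying (H1)--(H6), since both the monotonicity (H5) and the polynomial growth (H6) are preserved by the averaging in \eqref{eq60}. Thus the $n$-th control problem falls within the classical framework.

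First I would fix $n$ and apply Theorem \ref{dp} to the control problem with aggregator $f_n$, obtaining the dynamic programming principle $u_n(t,x)=\esssup_{v\in\mathcal U}G^{t,x;v,n}_{t,t+\delta}[u_n(t+\delta,X^{t,x;v}_{t+\delta})]$, where $G^{\cdot,n}$ denotes the backward semigroup associated with $f_n$. Combining this with the continuity of $u_n$ (which follows exactly as in Proposition \ref{propofcontinous}) and the smoothness of $f_n$, the standard reasoning of Peng \cite{Peng1992,Peng} --- choosing a test function $\varphi$ touching $u_n$ at $(t,x)$, applying It\^o's formula to $\varphi$ along $X^{t,x;v}$, and letting $\delta\to 0$ in the backward-semigroup estimate --- shows that $u_n$ is a viscosity solution of $\partial_t u_n+H_n(t,x,u_n,D_xu_n,D^2_xu_n)=0$ with $u_n(T,x)=h(x)$, the Hamiltonian $H_n$ being given by \eqref{pz20}.

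Next I would verify the hypotheses of the stability result, Proposition \ref{prop1}. Continuity of $H_n$ follows from the continuity of $b$, $\sigma$ and $f_n$ together with the compactness of $U$. The ellipticity \eqref{pz37} holds because $\sigma\sigma^*$ is positive semidefinite, so $X\le Y$ forces $\tfrac12 Tr(\sigma\sigma^*X)\le \tfrac12 Tr(\sigma\sigma^*Y)$ pointwise in $v$, and this inequality survives the supremum over $v\in U$. The required uniform convergences $u_n\to u$ and $H_n\to H$ on compact subsets are precisely Lemmas \ref{lemma2} and \ref{lemma3}. Since each $u_n$ is simultaneously a viscosity subsolution and supersolution, Proposition \ref{prop1} then yields that the limit $u$ is both, hence a viscosity solution of \eqref{eq31}; the terminal condition $u(T,x)=h(x)$ passes to the limit from $u_n(T,x)=h(x)$ using $u_n\to u$, and $u$ is continuous by Proposition \ref{propofcontinous}.

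The main obstacle is concentrated in the first step: justifying that each regularized value function $u_n$ is a viscosity solution of its own HJB equation. This is exactly where the bare continuity and monotonicity of $f$ would not suffice, and it is the smoothing that rescues the argument, since the classical DPP-to-viscosity reasoning needs the aggregator to be at least locally Lipschitz in $y$ in order to control the BSDE term as $\delta\to 0$. Everything downstream --- ellipticity, continuity, and the two uniform convergences --- is either elementary or already established in the preceding lemmas, so the remaining task is essentially the careful assembly of these ingredients within the stability framework.
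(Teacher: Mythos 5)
Your overall architecture (mollify $f$ in $y$, show each regularized value function $u_n$ solves its own HJB equation, then pass to the limit via Proposition \ref{prop1} using Lemmas \ref{lemma2} and \ref{lemma3}) is the same as the paper's, and the downstream part of your argument --- ellipticity, continuity of $H_n$, the two uniform convergences, the terminal condition --- matches the paper's Step 1. The gap sits exactly where you locate the ``main obstacle'': your justification that each $u_n$ is a viscosity solution of $\partial_t u_n+H_n(t,x,u_n,D_xu_n,D^2_xu_n)=0$. You argue that $f_n=\rho_n * f(t,x,\cdot,v)$ is smooth, hence locally Lipschitz in $y$, and that this places the $n$-th problem ``within the classical framework.'' It does not: the classical result one needs here (Theorem 7.3 of \cite{Peng}, the Lipschitz-aggregator DPP-to-HJB theorem) requires \emph{global} Lipschitz continuity in $y$, whereas mollification in $y$ only yields a locally Lipschitz function whose Lipschitz constant inherits the polynomial growth of $f$ from (H6) and can blow up like $|y|^p$ as $|y|\to\infty$. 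A monotone, merely locally Lipschitz aggregator of polynomial growth is precisely the situation the whole paper is built to circumvent, so citing ``the standard reasoning of Peng'' at this point is circular rather than a proof.

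The paper closes this gap with a second approximation layer that your proposal omits entirely. In its Step 1 it \emph{assumes} $|f(t,x,0,v)|$ is uniformly bounded and uses this, together with (H6) and the compact support of $\rho_n$, to bound the Lipschitz constant of $f_n$ in $y$ by a finite ($n$-dependent) constant, so that Theorem 7.3 of \cite{Peng} genuinely applies to the mollified problem before stability is invoked. In its Step 2 it removes the boundedness assumption by truncating the aggregator at $y=0$, setting $f_m\triangleq f-f(\cdot,0,\cdot)+\Pi_m\big(f(\cdot,0,\cdot)\big)$ with $\Pi_m(x)=\frac{\inf(m,|x|)}{|x|}x$, checking that $f_m$ still satisfies (H3)--(H6) with $f_m(t,x,0,v)$ bounded, proving the uniform convergences $f_m\to f$, $Y^{t,x,m;v}_t\to Y^{t,x;v}_t$, $u_m\to u$, $H_m\to H$ on compacts, and applying Proposition \ref{prop1} a second time. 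To repair your proof you would either need to reproduce this two-step reduction, or else carry out the DPP-to-viscosity argument from scratch for a monotone, locally Lipschitz aggregator of polynomial growth --- a substantial task, not a routine citation.
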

\begin{proof} We divide our proof into two steps.

Step 1. Assume that $|f(t,x,0,v)|$ is uniformly bounded for any $(t,x,v)\in[0,T]\times \mathbb{R}^n\times U$.

Note that the uniform boundedness of $|f(t,x,0,v)|$ implies the global Lipschitz of $f_n(t,x,y,v)$ with respect to $y$. To see this, for any $(t,x,v)\in[0,T]\times \mathbb{R}^n\times U$, $y_1,y_2\in\mathbb{R}^1$, by (H6) it yields that
\begin{eqnarray*}
&&|f_n(t,x,y_1,v)-f_n(t,x,y_2,v)|\nonumber\\
&=&|\int_{|a|\leq{1\over n}}f(t,x,a,v)\big(\rho_n(y_1-a)-\rho_n(y_2-a)\big)da|\nonumber\\
&\leq&\max_{a\in[-{1\over n},{1\over n}]}|f(t,x,a,v)|\int_{|a|\leq{1\over n}}|\rho_n(y_1-a)-\rho_n(y_2-a)|da\nonumber\\
&\leq&\max_{a\in[-{1\over n},{1\over n}]}\big(|f(t,x,0,v)|+\kappa(1+ |a|^p)\big)\int_{|a|\leq{1\over n}}C_p(n)|y_1-y_2|da\nonumber\\
&\leq&C_p(\kappa,n)|y_1-y_2|.
\end{eqnarray*}
Hence we immediately know from Theorem 7.3 in \cite{Peng} that $u_{n}(t,x)$ is the viscosity solution of the following equations:
\begin{numcases}{}\label{pz17}
{{\partial}\over{\partial t}}u_n+ H_n(t,x,u_n,D_{x}u_n, D^2_{x}u_n)=0,,\ \ \ \ \ \ (t,x) \in [0,T) \times \mathbb{R}^n,\nonumber\\
u_n(T,x)=h(x),
\end{numcases}
where $u_n$ and $H_n$ defined in (\ref{pz18}) and (\ref{pz20}), respectively.

By Lemmas \ref{lemma0}--\ref{lemma3} the uniform convergence of $f_{n}$ to $f$, $Y_t^{t,x,n;v}$ to $Y_t^{t,x;v}$, $u_{n}$ to $u$ and $H_{n}$ to $H$ holds in every compact subset of their own domains as $n\to\infty$. Moreover, $H_n$ satisfies the ellipticity condition (\ref{pz37}). Therefore,
by the stability of viscosity solution stated in Proposition \ref{prop1}, we know that $u$ is a viscosity solution of the limit equation
\begin{equation*}
\frac{\partial}{\partial t} u(t,x)+H(t,x,u(t,x), D_{x}u(t,x), D^2_{x}u(t,x) )=0.
\end{equation*}
As for the terminal value of above equation, i.e. $u(T,x)=h(x)$, which can be seen from the definition of the value function. Thereby $u$ is a solution of HJB equation (\ref{eq31}). \\ 

Step 2. $|f(t,x,0,v)|$ is not necessary to be uniformly bounded for any $(t,x,v)\in[0,T]\times \mathbb{R}^n\times U$.

We construct a sequence of functions
$$f_m(t,x,y,v)\triangleq f(t,x,y,v)-f(t,x,0,v)+\Pi_m\big(f(t,x,0,v)\big)\ \ \ \ \text{for}\ m\in\mathbb{N},$$
where $\Pi_m(x)={\inf(m,|x|)\over|x|}x$.

With these $f_m$, we get a family of BSDEs for $m\in\mathbb{N}$ on the interval $[t,T]$:
\begin{equation*}
\begin{aligned}
{Y}^{t,x,m;v}_t= h(X_T^{t,x;v}) + \int_t^T f_{m}(s, X_s^{t,x;v},{Y}^{t,x,m;v}_s ,v_s )ds-\int_t^T {Z}^{t,x,m;v}_s dB_s.
\end{aligned}
\end{equation*}
Similarly we define the corresponding cost functional
\begin{eqnarray*}
J_{m}(t,x;v)\triangleq  Y_t^{t,x,m;v}\ \ \ \ {\rm for}\ v\in \mathcal{U},\ t\in[0,T],\ x\in\mathbb{R}^1,
\end{eqnarray*}
the value function
\begin{eqnarray*}
u_{m}(t,x)\triangleq \text{esssup}_{v\in \mathcal{U}}J_{m}(t,x;v)\ \ \ \ {\rm for}\ t\in[0,T],\ x\in\mathbb{R}^1,
\end{eqnarray*}
and the Hamiltonian
\begin{equation*}
   H_m(t,x,r,p,A)\triangleq\sup_{v \in U} \{\frac{1}{2}Tr(\sigma(t,x,v)\sigma^*(t,x,v)A)+ \langle p,b(t,x,v)\rangle+ f_m(t,x,r,v)\}
\end{equation*}
for $(t,x,r,p,A)\in [0,T]\times \mathbb{R}^n  \times \mathbb{R}^1 \times \mathbb{R}^n \times \mathbb{S}^n$.

Since $f_m(t,x,0,v)=\Pi_m\big(f(t,x,0,v)\big)$, $f_m(t,x,0,v)$ is uniformly bounded. Moreover, it is not difficult to verify that $f_m$ satisfies Conditions (H3)--(H6). 
Hence $f_m$ satisfies the conditions in Step 1. By Step 1 we know that $u_m$ is a viscosity solution of the following equation
\begin{numcases}{}\label{pz21}
{{\partial}\over{\partial t}}u_m+ H_m(t,x,u_m,D_{x}u_m, D^2_{x}u_m)=0,~(t,x) \in [0,T) \times \mathbb{R}^n,\nonumber\\
u_m(T,x)=h(x).
\end{numcases}

We then prove that the uniform convergence of $f_{m}$ to $f$, $Y_t^{t,x,m;v}$ to $Y_t^{t,x;v}$, $u_{m}$ to $u$ and $H_{m}$ to $H$ also holds in every compact subset of their own domains as $m\to\infty$, among which only the proof for the convergence of $f_{m}$ to $f$ is very different from Lemma 4.3 due to the different definitions of $f_m$ from $f_n$ and other convergence can be proved similarly according to Lemmas 4.4--4.6 in turn.

In fact, the uniform convergence of $f_{m}$ to $f$ in every compact subset of $[0,T]\times \mathbb{R}^n\times\mathbb{R}^1\times U$ is easy to see if we notice that for any given compact set $K\subset [0,T] \times \mathbb{R}^n \times\mathbb{R}^1\times U$, $f(t,x,y,v)$ is bounded by a positive integer $M_K$ for any $(t,x,y,v)\in K$ since the continuity of $f$. Hence, when $m\geq M_K$
\begin{eqnarray*}
&&\sup_{(t,x,y,v) \in K}|f_m(t,x,y,v)-f(t,x,y,v)|\nonumber\\
&=&\sup_{(t,x,y,v) \in K}|f(t,x,0,v)-\Pi_m\big(f(t,x,0,v)\big)|=0,
\end{eqnarray*}
which implies the uniform convergence of $f_{m}$ to $f$ in every compact subset of their domain as $m\to\infty$.

%
%
%

Finally, using Proposition 4.7 again we know that $u$ satisfies the limit equation of (\ref{pz21}), which together with the fact $u(T,x)=h(x)$ in view of the definition of value function shows that $u$ is still a solution of HJB equation (\ref{eq31}) even if $|f(t,x,0,v)|$ is not necessary to be uniformly bounded  for any $(t,x,v)\in[0,T]\times \mathbb{R}^n\times U$.
\end{proof}

\section{Example}

As mentioned in Introduction, Duffie and Epstein \cite{Duffie} presented the stochastic differential formulation of recursive utility which can be regarded as the solution of a BSDE. Based on this basic correspondence, we give an example to demonstrate the application of our study to utility.

We start from setting an financial market with two assets which can be traded continuously. One is the bond, a non-risky asset, whose price process $P^0_t$ is governed by the ordinary differential equation
\begin{equation}\label{exampleeq1}
P^0_t=1+\int_0^tr_sP^0_sds.
\end{equation}
The other asset is the stock, a risky asset, whose price process $P_t$  is modeled by the linear SDE
\begin{equation}\label{pz8}
P_t=p+\int_0^tP_sb_sds+\int_0^tP_s\sigma_sdB_s,\ \ \ \ {\rm where}\ p>0\ {\rm is}\ {\rm given}.
\end{equation}
In (\ref{exampleeq1}) and (\ref{pz8}), $r:[0,T]\to\mathbb{R}^1$ is the interest rate of the bond, $b:[0,T]\to\mathbb{R}^1$ is the appreciation rate of the stock and $\sigma:[0,T]\to\mathbb{R}^1$ is the volatility process, all of which are continuous functions.

A small agent whose actions cannot affect market prices may decide at time $t \in [0,T]$ what proportion of the wealth to invest in the stock. Denote the proportion and the wealth by  $\pi:\Omega\times[0,T]\to[-1,1]$ and $X:\Omega\times[0,T]\to\mathbb{R}^1$, respectively, then the equation which the increment of the wealth satisfies follows immediately:
\begin{numcases}{}\label{pz10}
dX_t = [r_t X_t +(b_t-r_t)\pi_tX_t-c_t]dt+ X_t\pi_t\sigma_tdB_t\nonumber\\
X_0=x,
\end{numcases}
where $c:\Omega\times[0,T]\to[a_1,a_2]$, $0\leq a_1<a_2$, is a restricted consumption decision and $x>0$ is the initial wealth of the investor. It is clear that (\ref{pz10}) acting as the state equation satisfies Conditions (H1) and (H2).

We assume that the stochastic differential utility preference of the investor is a continuous time Epstein-Zin utility as illustrated in (\ref{eq6}) and the utility satisfies the following BSDE:
\begin{numcases}{}\label{exampleeqv}
dV_t=-\frac{\delta}{1-{1\over\psi}}(1-\gamma)V_t\Big[ (\frac{c_t}{((1-\gamma)V_t)^{\frac{1}{1-\gamma}}})^{1-\frac{1}{\psi}} - 1 \Big]dt + Z_t dB_t\nonumber\\
V_T =h(X_T),
\end{numcases}
where $h:\mathbb{R}^1\to\mathbb{R}^1$ is a given Lipschitz continuous function.
The optimization objective of the investor is to maximize his/her utility as below:
\begin{equation*}
\max_{(\pi,c) \in \mathcal{U}}V_0,
\end{equation*}
where $$\mathcal{U}\triangleq \{(\pi,c)|\ (\pi,c):[0,T]\times\Omega\to[-1,1]\times[a_1,a_2]\ \text{is the}\ \text{$\{\mathscr{F}_t\}_{0\leq t\leq T}$-adapted process}\}$$ is the admissible control set.

Certainly, the aggregator of (\ref{exampleeqv}) does not satisfy the Lipschitz condition with respect to the utility and the consumption at all time, but we can find applications of our study in non-Lipschitz cases.  Notice that Proposition 3.2 in \cite{Kraft} provides four cases in which the aggregator is monotonic with respect to the utility. Taking into account the polynomial growth condition (H6) with respect to the utility we select two cases as follows for further consideration:
\begin{eqnarray*}
 &{\rm (i)}& \gamma>1  \text { and } \psi>1;\\
 &{\rm (ii)}& \gamma<1  \text { and } \psi<1.
\end{eqnarray*}
Then we can find suitable powers of utility such that the aggregator of (\ref{exampleeqv}) is continuous and monotonic but non-Lipschitz in $\mathbb{R}^1$ with respect to the utility in both cases. As for the continuity with respect to the consumption, if $a_1>0$, both cases are Lipschitz continuous obviously. In particular, if $a_1=0$,  only case (i) satisfies the continuous but not Lipschitz continuous condition with respect to the consumption.

Therefore, for all suitable non-Lipschitz situations which satisfy Conditions (H3)--(H6), we can use Theorem \ref{th2} to know that the value function of the investor is a viscosity solution of the following HJB equation:
\begin{numcases}{}
\max_{(\pi,c)\in[-1,1]\times[a_1,a_2]}\Big\{w_t(t,x)+[x(r_t+\pi(b_t-r_t))-c]w_x(t,x)+ \frac{1}{2}x^2\pi^2\sigma_t^2w_{xx}(t,x)\nonumber\\
\ \ \ \ \ \ \ \ \ \ \ \ \ \ \ \ \ \ \ \ \ \ +\frac{\delta}{1-{1\over\psi}}(1-\gamma)w(t,x)\Big[ (\frac{c}{((1-\gamma)w(t,x))^{\frac{1}{1-\gamma}}})^{1-\frac{1}{\psi}} - 1 \Big]\Big\}=0\nonumber\\
w(T,x)=h(x).\nonumber\\\vspace{2mm}
\end{numcases}
{\large\bf {Acknowledgements}}. The authors would like to thank Professor Shanjian Tang whose comments on the related topics initiated our motivation to do this work. Also we thank Dr. Fu Zhang for useful conversations.

\end{document}